\tikzstyle{arc}=[->,shorten <=3pt, shorten >=3pt,
\tikzstyle{edge}=[shorten <=2pt, shorten >=2pt,
\tikzstyle{vertex}=[circle, fill=white, draw,
\DeclareMathOperator{\girth}{girth}
\newtheorem{theorem}{Theorem}
\newtheorem*{theorem*}{Theorem}
\newtheorem{lemma}[theorem]{Lemma}
\newtheorem{observation}[theorem]{Observation}
\newtheorem{proposition}[theorem]{Proposition}
\newtheorem{corollary}[theorem]{Corollary}
\newtheorem*{problem}{Problem}
\newtheorem*{question}{Question}
\newtheorem{conjecture}[theorem]{Conjecture}
\begin{document}

\begin{frontmatter}

\title{Oriented cobicircular matroids are $GSP$\tnoteref{t1}}

\tnotetext[t1]{This work is an extension of our previous pre-print: arXiv:2203.12549}

\author[FC,FernUni]{Santiago Guzm\'an-Pro}
\ead{sanguzpro@ciencias.unam.mx}

\author[FernUni]{Winfried Hochst\"attler}
\ead{winfried.hochstaettler@fernuni-hage.de}

\address[FC]{Facultad de Ciencias\\
Universidad Nacional Aut\'onoma de M\'exico\\
Av. Universidad 3000, Circuito Exterior S/N\\
C.P. 04510, Ciudad Universitaria, CDMX, M\'exico}

\address[FernUni]{FernUniversit\"at in Hagen\\
Fakult\"at f\"ur Mathematik und Informatik\\
 58084 Hagen}


\begin{abstract}
Colouring and flows are well-known dual notions in Graph Theory.
In turn, the definition of flows in graphs naturally extends to flows in
oriented matroids.
So, the colour-flow duality gives a generalization of Hadwiger's conjecture
about graph colourings, to a conjecture about coflows of oriented matroids.
The first non-trivial case
of Hadwiger's conjecture for oriented matroids reads as follows. If $\mathcal{O}$
is an $M(K_4)$-minor free
oriented matroid, then 
$\mathcal{O}$ has a now-where $3$-coflow, i.e., it is $3$-colourable in the sense of
Hochst\"attler-Ne\v{s}et\v{r}il.
The class of generalized series parallel ($GSP$) oriented matroids is
a class of $3$-colourable oriented matroids with no $M(K_4)$-minor.
So far, the only technique towards proving that all orientations of a 
class $\mathcal{C}$ of $M(K_4)$-minor free matroids are $GSP$ (and thus
$3$-colourable), has been to show that every matroid in $\mathcal{C}$
has a positive coline. Towards proving Hadwiger's conjecture for the class
of gammoids, Goddyn, Hochst\"attler, and Neudauer conjectured that
every gammoid has a positive coline. In this work we disprove this
conjecture by exhibiting an infinite class of strict gammoids that do not
have positive colines. We conclude by proposing a simpler technique
for showing that certain oriented matroids are $GSP$. In particular,
we recover that oriented lattice path matroids are $GSP$, and we show
that oriented cobicircular matroids are $GSP$.
\end{abstract}

\begin{keyword}
Flows \sep Colourings \sep Matroids \sep Oriented matroids \sep Bicircular matroids
\end{keyword}

\end{frontmatter}

\section{Introduction}

Hadwiger's Conjecture is a well-known and long open conjecture regarding proper graph
colourings. It states that for every positive integer $k$ if a graph $G$ contains no
$K_{k+1}$-minor, then $G$ is $k$-colourable. This conjecture has been proven 
true for $k\le 5$ \cite{robsertsonC13}, and remains open for larger integers. 

The notion of proper graph colourings is the dual concept of
nowhere-zero flows (NZ flows).  The latter, has a natural generalization
to oriented matroids, 
which Hochst\"attler and Ne\v{s}et\v{r}il~\cite{hochstattlerEJC27} use
to  propose a definition of the chromatic number of an oriented matroid.
It turns out that Hadwiger's conjecture can be generalized to this context;
but in this scenario, the first non-trivial case remains open and it reads as follows. 

\begin{conjecture}\label{conj:hadw}
Every (loopless) $M(K_4)$-minor free oriented matroid has a nowhere-zero $3$-coflow.
\end{conjecture}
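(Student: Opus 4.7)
The plan is to proceed by minimum-counterexample induction on $|E(\mathcal{O})|$. Let $\mathcal{O}$ be a loopless $M(K_4)$-minor free oriented matroid of minimum size admitting no nowhere-zero $3$-coflow. Standard reductions---deleting coloops, simplifying parallel classes, and using $1$-sum and $2$-sum decompositions---should force $\mathcal{O}$ to be simple, cosimple, and $3$-connected. The key lemmas needed here are that NZ $3$-coflows behave well under $1$-sums and $2$-sums (a coflow on the whole matroid corresponds to a compatible pair of coflows on the summands, exactly as in the graphic case), and that the $M(K_4)$-minor free class is closed under taking summands, so that minimality of $\mathcal{O}$ can be applied to each piece.

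Next I would exploit the structural landscape of $M(K_4)$-minor free matroids and dispose of the easy regimes first. The graphic case reduces to series-parallel graphs, where the conjecture is classical. Small-rank and small-corank cases (such as uniform $U_{2,n}$ and $U_{n-2,n}$, and the rank-$3$ whirls) can be handled directly by exhibiting explicit $3$-coflows. Whole families such as oriented cobicircular matroids and oriented lattice path matroids are already known to be $GSP$ by the results of the present paper, and hence admit NZ $3$-coflows. These observations should peel off substantial portions of the $3$-connected case and leave a well-identified residue.

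For the remaining $3$-connected instances I would try to extend the simpler $GSP$ criterion proposed in the paper beyond the positive-coline paradigm, since the positive-coline approach of Goddyn--Hochst\"attler--Neudauer is precisely what this paper refutes in general. Concretely, I would look for, in every $3$-connected $M(K_4)$-minor free oriented matroid of rank at least $3$, a single element whose deletion or contraction preserves both $M(K_4)$-minor freeness and NZ $3$-coflowability, yielding a contradiction with the minimality of $\mathcal{O}$. The main obstacle is the absence of a clean excluded-minor or decomposition theorem for the $M(K_4)$-minor free class outside the binary world: as witnessed by the strict gammoid examples constructed in this paper, the class is genuinely wild and no uniform inductive hook presents itself. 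Closing this structural gap---via a new decomposition theorem for $M(K_4)$-minor free oriented matroids, or via a non-inductive global argument building an explicit $3$-coflow from the topological structure of the big face lattice---is essentially the content of the conjecture, and is where I would expect the proof to stall.
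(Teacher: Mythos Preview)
The statement you are attempting to prove is \emph{Conjecture}~\ref{conj:hadw} in the paper, not a theorem; the paper does not contain a proof of it, nor does it claim one. The entire paper is motivated by this conjecture and establishes only partial results (oriented cobicircular matroids and lattice path matroids are $GSP$, rank-$3$ and corank-$3$ $M(K_4)$-free oriented matroids are handled), while explicitly leaving the general statement open. There is therefore no ``paper's own proof'' against which to compare your proposal.

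Your proposal is not a proof but a research outline, and you yourself acknowledge this: the final paragraph concedes that the argument is ``where I would expect the proof to stall,'' and that closing the structural gap ``is essentially the content of the conjecture.'' That assessment is accurate. The reductions you sketch (to simple, cosimple, $3$-connected) are standard and correct, and the easy regimes you list are indeed handled in the literature or in this paper. But no decomposition theorem, inductive hook, or global construction is currently known for the residual $3$-connected $M(K_4)$-minor free oriented matroids, and your outline does not supply one. In short, there is no genuine gap to name beyond the one you already name: the conjecture is open, and your proposal does not close it.
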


Goddyn and Hochst\"attler \cite{goddynSM} observed that Hadwiger's
conjecture for regular oriented matroids, includes the cases $k = 4$ and
$k = 5$ of  Tutte's $k$-flow conjecture \cite{tutteCJM6, tutteCMA, tutteJCT},
which remain open.

Towards proving Conjecture~\ref{conj:hadw}, Goddyn, Hochst\"attler and Neudauer,
introduce the class of \textit{Generalized Series Parallel} ($GSP$) oriented matroids
and show that every $GSP$ oriented matroid has a NZ
$3$-coflow~\cite{goddynDM339}. 
It might be too much to hope for, but if every $M(K_4)$-free oriented matroid
is $GSP$, then Conjecture~\ref{conj:hadw} follows directly. In any case,
this raises the fundamental problem of determining when a class $\mathcal{C}$
of oriented matroids is a subclass of $GSP$ oriented matroids. To this end
and in the same work, the previously mentioned authors show that if
$\mathcal{C}'$ is a class of orientable matroids closed under minors such that
every member of $\mathcal{C}'$ has a  positive coline, then the class $\mathcal{C}$
of all orientations of matroids in $\mathcal{C}'$ is a class of $GSP$ oriented
matroids. Finally, they show that every bicircular matroid has a positive coline,
so, if $\mathcal{O}$ is an oriented bicircular matroid,  then
$\mathcal{O}$ is $GSP$ and thus it has a NZ $3$-coflow. 

Bicircular matroids are transversal matroids, and the smallest class
closed under minors that contains transversal matroids is the class of gammoids. 
In turn, the class gammoids is a class of $M(K_4)$-free orientable matroids, so 
Goddyn, Hochst\"attler and Neudauer pose the following conjecture. 

\begin{conjecture}\label{conj:coline}\cite{goddynDM339}
Every simple gammoid of rank at least two has a positive coline.
\end{conjecture}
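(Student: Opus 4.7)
The plan is to attempt the conjecture by mimicking the argument used by Goddyn, Hochst\"attler, and Neudauer for bicircular matroids: isolate a structural feature of the digraph representation of a strict gammoid that forces a positive coline. Concretely, let $M=M(D,T)$ be a simple strict gammoid of rank $r\ge 2$, where $D$ is a digraph and $T\subseteq V(D)$ is the set of targets; recall that cocircuits of $M$ correspond to minimal blockers of linkages from elements to $T$. I would first try to identify a rank-$(r-2)$ flat $L$ such that the simplification of $M/L$ is a rank-$2$ matroid whose cocircuits admit a consistent positive signing, preferably reading off $L$ directly from the digraph---for example as the co-ancestry of a carefully chosen source, or as the image of two simultaneously selected targets in a linking decomposition.

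The natural implementations are either induction on the rank $r$ or induction on $|V(D)|$. For the rank induction, the base case $r=2$ is immediate, since in a simple rank-$2$ matroid the unique coline is the empty flat and positivity is trivially checkable. For the inductive step, I would pick an element $e$, apply the inductive hypothesis to the contraction $M/e$ (again a gammoid, by closure of the class under contraction) to obtain a positive coline $L'$, and try to lift $L'\cup\{e\}$ or an adjacent flat to a positive coline of $M$. For the vertex induction, I would remove a source or a non-target sink from $D$, argue that the resulting matroid is a strictly smaller gammoid with the same ``blocker-type'' cocircuit theory, and transfer the coline produced by induction back through this structural operation. In either case the combinatorial engine is an understanding of how the simplification of $M/L$ is affected by the chosen reduction.

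The main obstacle I foresee is that positivity of a coline $L$ is a global constraint on \emph{all} cocircuits through $L$, whereas the cocircuits of a strict gammoid are described by minimal blockers whose interactions are delicate. Unlike bicircular matroids---where the underlying graph is essentially canonical and cocircuits correspond to specific handcuff or theta configurations---strict gammoids admit many non-equivalent digraph representations, and their cocircuit structure can be highly intricate. Consequently, the lifting step in any inductive scheme is the bottleneck: positivity of a rank-$2$ minor need not propagate to a positive coline after undoing a contraction, since new parallel classes and new cocircuits through the candidate flat may be created. If this obstruction turns out to be essential, my plan would pivot to searching for strict gammoids---for instance, those arising from small tournaments or layered acyclic digraphs outside the bicircular realm---whose cocircuit geometry around every candidate coline is incompatible with positive signings, and then to amplifying such an example via disjoint unions, series extensions, or iterated ``plumbing'' of digraphs to produce an infinite counterexample family.
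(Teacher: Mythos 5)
This statement is a conjecture that the paper \emph{refutes}, not a theorem it proves: the ``proof'' attached to it in the paper is Corollary~\ref{cor:counter}, which exhibits counterexamples. So no implementation of your first two paragraphs can succeed. Your rank induction already breaks at the point you yourself identify as the bottleneck: contracting an element and lifting a positive coline of $M/e$ back to $M$ fails because the copoint partition of the candidate flat in $M$ can acquire new multiple classes, and this is not a technical nuisance but an essential obstruction, since the conclusion is simply false. Concretely, the paper works with the dual statement (Observation~\ref{obs:coline-doublec}): a simple gammoid has a positive coline iff its dual has a positive double circuit. Bicircular matroids $B(G)$ are transversal, hence gammoids, and gammoids are closed under duality, so it suffices to find a cosimple bicircular matroid with no positive double circuit. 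Theorem~\ref{thm:main} shows that whenever $\girth(G)\ge 5$ the matroid $B(G)$ has none: the degree of any double circuit in a bicircular matroid is at most $6$ (Corollary~\ref{cor:6}, via Observation~\ref{obs:dcseries} and Matthews' list of uniform bicircular matroids), and a careful count of the singular classes, which correspond to unsubdivided edges among at most four distinguished vertices of $G[D]$, forces a cycle of length at most $4$ in $G$. The Petersen graph and the dodecahedron then give explicit simple gammoids $B(G)^\ast$ of rank at least two with no positive coline.

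Your fallback plan in the last paragraph has the right instinct but points in the wrong direction. You propose hunting for counterexamples ``outside the bicircular realm'' (tournaments, layered acyclic digraphs), yet the actual counterexamples live as close to the original positive result as possible: they are duals of the very bicircular matroids for which Goddyn, Hochst\"attler, and Neudauer proved positive colines exist. The lesson is that positivity of colines is not self-dual, and the girth condition is the precise lever: high girth kills every short circuit that a positive circuit partition would require. Also note that, as written, your submission is a research plan with two unexecuted branches rather than a proof or a disproof; neither branch is carried to a conclusion, so even apart from the falsity of the statement nothing is established.
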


In this work, we disprove this conjecture by exhibiting a large class of
cobicircular matroids that do not have positive colines; but we show
that nonetheless, every orientation of a cobicircular matroid is $GSP$. 

The rest of this work is organized as follows. In Section~\ref{sec:prelim},
we introduce all concepts needed to state  Conjecture~\ref{conj:coline}.
In Section~\ref{sec:counter}, we introduce bicircular
matroids and prove the necessary results to exhibit a class of counterexamples to
the previously mentioned conjecture. In Section~\ref{sec:GSP},
we prove that all orientations of cobicircular matroids are $GSP$. 
Finally, in Section~\ref{sec:conclusions} we conclude this work
by posing some further questions and problems that arose from 
this work.

\section{Preliminaries}
\label{sec:prelim}

We assume basic familiarity with matroids and with oriented matroids, 
standard references are~\cite{bjorner1993} and \cite{oxley1992}.

\subsection{$GSP$ oriented matroids}

Consider an oriented matroid $O$ with ground set $E$ and collection
of signed circuits $\mathcal{C}$. The \textit{signed vector} of a signed circuit
$C = (C^+, C^-)$ is the characteristic vector of $C$ in $\{0,1,-1\}^{E}$. In 
other words, $C(e) = 1$ if $e\in C^+$; $C(e) = -1$ if $e\in C^-$; and
$C(e) = 0$ if $e\not\in C$. 
The \textit{flow lattice} of $O$, denote by $\mathcal{F}_O$
is the integer lattice generated by the signed vectors of the circuits of $O$.
In symbols, 
\[
\mathcal{F}_O = \Biggl\{\sum_{C\in\mathcal{C}} \lambda_CC|~\lambda_C\in
\mathbb{Z}\Biggl\}.
\]
We call any element $x\in \mathcal{F}_O$ a flow of $O$. The flow lattice
of uniform matroids is characterized in \cite{hochstattlerCDM2}. In particular,
the following statements hold. 

\begin{lemma}\label{lem:uniformflows}
Let $O$ be an orientation of a rank $r$ uniform matroid of at least $r+2$
elements. For any pair of elements $e,f\in E$, the following statements hold:
\begin{enumerate}
	\item if $r$ is even, then there is a flow $x\in \mathcal{F}_O$ such
	that $x(e) = 1$ and $x(e') = 0$ for every $e'\neq e$, and
	\item if $r$ is odd, then there is a flow $x\in \mathcal{F}_O$ such
	that $|x(e)| = |x(f)| = 1$ and $x(e') = 0$ for every $e'\not\in\{e,f\}$.
\end{enumerate}
\end{lemma}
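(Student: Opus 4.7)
The plan is to reduce to the case $|E|=r+2$ and then to exhibit the required flow explicitly as a short signed sum of circuits through $e$.

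First I would record a parity obstruction that explains the even/odd dichotomy and shows (2) is optimal when $r$ is odd. Every circuit of $U_{r,n}$ has size $r+1$; when $r$ is odd this is even, so every signed circuit vector has even Hamming weight. Reducing modulo $2$, every element of $\mathcal{F}_O$ must then have an even number of non-zero coordinates, and in particular no flow can be supported on a single element. In the even case no such constraint arises, which is consistent with~(1).

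Now fix $S\subseteq E$ with $|S|=r+2$ containing $\{e,f\}$; this is possible since $n\ge r+2$. The restriction $O|_S$ is an orientation of $U_{r,r+2}$, and any signed circuit of $O|_S$ is also a signed circuit of $O$, so any flow of $O|_S$ extended by zero on $E\setminus S$ is a flow of $O$. It therefore suffices to work inside $O|_S$. The $r+1$ circuits of $O|_S$ containing $e$ are exactly the sets $S\setminus\{g\}$ for $g\in S\setminus\{e\}$; let $\sigma_g$ denote the signed vector of $S\setminus\{g\}$, normalised so that $\sigma_g(e)=+1$. I would then choose signs $\epsilon_g\in\{\pm 1\}$ so that $x:=\sum_{g\in S\setminus\{e\}}\epsilon_g\sigma_g$ is supported on $\{e\}$ when $r$ is even, and on $\{e,f\}$ when $r$ is odd. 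For each $h\in S\setminus\{e\}$ the element $h$ appears in every $\sigma_g$ except $\sigma_h$, and the values $\sigma_g(h)$ as $g$ varies are controlled by the three-term signed Grassmann--Pl\"ucker relations (equivalently, by repeated weak elimination between pairs $\sigma_g,\sigma_{g'}$). Reading these off the chirotope yields a canonical alternating pattern of $\epsilon_g$ under which the $r+1$ entries of the column at $h$ telescope to $0$ in the even case, and to $0$ for every $h\notin\{e,f\}$ in the odd case.

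The main obstacle is controlling the \emph{magnitude} of the surviving entries: a priori the alternating sum could yield a coefficient of $\pm 2$ or more at $e$. The stated value $\pm 1$ relies on the numerical coincidence that $U_{r,r+2}$ has exactly $r+1$ circuits through $e$ together with the three-term nature of the Grassmann--Pl\"ucker relations, so that only neighbouring pairs $\sigma_g,\sigma_{g'}$ interact on each coordinate; verifying this bookkeeping cleanly is the computational heart of the argument. As a cross-check, both (1) and (2) can also be read off directly from the explicit description of $\mathcal{F}_O$ for uniform matroids referenced just above.
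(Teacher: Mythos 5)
Your reduction to a subset $S$ with $|S|=r+2$ is sound (a signed circuit of $O|_S$ is a signed circuit of $O$, so flows extend by zero), and your parity remark correctly explains why the odd-rank case cannot be improved to a single nonzero entry. But the main line of your argument has a genuine gap exactly where you flag it: the existence of signs $\epsilon_g$ making $x=\sum_g \epsilon_g\sigma_g$ telescope to a $\pm1$ at $e$ (and at $f$) is asserted via an unspecified ``canonical alternating pattern,'' not proved. This is not mere bookkeeping. The sign data $\sigma_g(h)$ depends on which orientation of $U_{r,r+2}$ the restriction $O|_S$ is, and $U_{r,r+2}$ has many reorientation classes (its dual is a rank-$2$ uniform oriented matroid, classified by cyclic orderings with arbitrary reorientations), so no single alternating pattern of the $\epsilon_g$ can work uniformly; one would have to run the Grassmann--Pl\"ucker/elimination analysis case by case, and also rule out the coefficient at $e$ coming out as $\pm3,\pm5,\dots$ rather than $\pm1$. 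Carrying this out is essentially equivalent to recomputing the flow lattice of corank-$2$ uniform oriented matroids, which is the content of the Hochst\"attler--Nickel result.

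What you relegate to a final ``cross-check'' is in fact the paper's entire proof: it quotes the explicit description of $\mathcal{F}_O$ for oriented uniform matroids from the flow-lattice paper --- $\mathbb{Z}^n$ in the even-rank case, and in the odd-rank case either $\{v\}^\perp\cap\mathbb{Z}^n$ for some $v\in\{1,-1\}^n$ or the sublattice of vectors with even coordinate sum --- and in each case writes down the required one- or two-entry vector directly. If you replace your constructive sketch by that verification (checking that $e_i$, respectively a suitable $\pm1$ combination of $e_i$ and $e_j$, lies in each of the possible lattices), you obtain a complete proof; as written, the construction you propose as the main argument is not established.
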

\begin{proof}
The descriptions of flow lattice used in this proof are taken from 
\cite{hochstattlerCDM2}. With out loss of generality suppose that $E = [n]$.
The first statement holds because
the flow lattice of a uniform matroid of even rank $r$, $r\le n-2$, is
$\mathbb{Z}^{n}$. Regarding the flow lattice of oriented uniform matroids of
odd rank, there are two possibilities. The first option, is that there is a vector
$v\in\{1,-1\}^n$ such that $\mathcal{F}_O$ is the orthogonal 
space $\{v\}^\perp$ intersected with $\mathbb{Z}^n$%
\footnote{This case occurs if and only if $\mathcal{O}$ is a reorientation of a 
so-called neighbourly oriented matroid}.  In this case, 
for any $i,j\in[n]$ with $i\neq j$, consider the vector $x$ defined
as follows: $x(k) = 0$ for any $k\not\in \{i,j\}$; $x(i) = v(i)$ and $x(j) = -v(j)$. 
Clearly, $x\in\{v\}^\perp\cap \mathbb{Z}^n$, and so, this case is settled. 

The other option, is that the flow lattice of an oriented uniform matroid
of odd rank is characterized as follows. Let $v_1$ be the all $1$ vector
in $\{0,1,-1\}^n$. The flow lattice  $\mathcal{F}_O$ is the set of points
$x\in\{0,1,-1\}^n$ such that the product $x^Tv_1$ is even.
In this case, for any $i,j\in [n]$
with $i\neq j$, consider the vector $x$ where $x(i) = x(j) = 1$, and $x(k) = 0$
for every $k\not\in\{i,j\}$. Clearly, $x\in \mathcal{F}_O$ and thus, the claim
holds in both cases. 
\end{proof}

Analogously, the \textit{coflow lattice} of an oriented matroid $O$ with
signed cocircuits $\mathcal{D}$, is the integer lattice generated by
the signed vectors of the cocircuits of $O$.  Clearly, the coflow lattice
of $O$ is the flow lattice of the dual $\mathcal{O}^*$, and so, we denote
by $\mathcal{F}_{O^*}$ the coflow lattice of $O$. An element
$x\in \mathcal{F}_{O^*}$ is a coflow of $O$. Such an element is 
a \textit{nowhere-zero-$k$ coflow} if $0<|x(e)| < k$ for every $e\in E$.

Having introduced all this nomenclature, we restate the first non-trivial
open case of Hadwiger's conjecture for oriented matroids (and its
dual statement in terms of flows, that follows because $M(K_4)$
is self-dual).

\begingroup
\def\thetheorem{\ref{conj:hadw}}
\begin{conjecture}
Every (loopless)  $M(K_4)$-minor free oriented matroid has a nowhere-zero $3$-coflow. 
Equivalently, 
every (coloop free) $M(K_4)$-minor free oriented matroid has a nowhere-zero $3$-flow.
\end{conjecture}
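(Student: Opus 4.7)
The plan is twofold: establish the equivalence of the two formulations, and then attempt the existence claim.

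For the equivalence, note that $M(K_4)$ is self-dual, so the class of $M(K_4)$-minor free oriented matroids is closed under taking duals; loops of $O$ become coloops of $O^*$, and the coflow lattice of $O$ coincides with the flow lattice of $O^*$. Hence a NZ $3$-coflow of a loopless $M(K_4)$-free oriented matroid $O$ corresponds precisely to a NZ $3$-flow of the coloop-free $M(K_4)$-free oriented matroid $O^*$, and the two statements are interchangeable.

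For the existence, I would pursue the strategy implicit in the paper and in the preceding work of Goddyn, Hochst\"attler and Neudauer: show that every loopless $M(K_4)$-minor free oriented matroid is $GSP$. Since every $GSP$ oriented matroid admits a NZ $3$-coflow, this suffices. I would proceed by strong induction on $|E|$. For a minimum counterexample $O$, standard reductions force $O$ to be simple and cosimple with no loops, coloops, or parallel/series pairs. The most developed tool at this point is the \emph{positive coline}: a rank-$(r{-}2)$ flat whose complementary cocircuit is positive, whose presence permits a $GSP$-reduction that lifts back through an induction on $|E|$. Locating such a coline (or an alternative reducing structure) in a general $M(K_4)$-free oriented matroid is where the argument must do its real work.

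The main obstacle, and indeed the central discovery of this paper, is that positive colines need not exist: Conjecture~\ref{conj:coline} is disproved in Section~\ref{sec:counter} via an infinite family of cobicircular matroids. Consequently the positive-coline method alone cannot close the argument, and a complete proof would require either a richer toolbox of $GSP$-reductions applicable to every $M(K_4)$-free oriented matroid, or an entirely different mechanism for producing NZ $3$-coflows. In the absence of such a unified technique the conjecture remains open, and partial progress—such as the $GSP$ certification of oriented cobicircular matroids carried out in Section~\ref{sec:GSP}—must be obtained by tailoring reduction arguments to specific subclasses.
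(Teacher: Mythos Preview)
This statement is a \emph{conjecture}, not a theorem; the paper does not prove it and explicitly presents it as open. The only part the paper justifies is the equivalence of the two formulations, which it dispatches in a parenthetical remark: ``its dual statement in terms of flows, that follows because $M(K_4)$ is self-dual.'' Your equivalence argument is exactly this, spelled out in slightly more detail (self-duality of $M(K_4)$, loops $\leftrightarrow$ coloops, coflow lattice of $O$ equals flow lattice of $O^*$), and is correct.

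The remainder of your proposal is not a proof but a correct assessment that no proof is currently available: you outline the $GSP$ strategy, note that the positive-coline tool is now known (by this very paper) to be insufficient in general, and conclude that the conjecture remains open. That is an accurate summary of the state of affairs and matches the paper's narrative. There is no gap to flag here beyond the obvious one: you have not proved the conjecture, nor does the paper, nor does anyone.
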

\addtocounter{theorem}{-1}
\endgroup

This conjecture restricted to graphic matroids, is equivalent to the statement:
every graph with no $K_4$ minor is $3$-colourable \cite{goddynDM339}.
This equivalent statement can be easily proven, even in the case of regular
oriented matroids, since the class of $M(K_4)$-minor
free regular oriented matroids correspond to series parallel graphs. 
Thus, with a simple inductive argument one can show that every graph with no
$K_4$-minor is $3$-colourable. 

In an attempt to mimic the previous technique for oriented matroid and 
$NZ$-$3$ coflows, Goddyn, Hochst\"attler, and Neudauer introduce the
class of \textit{generalized series parallel} ($GSP$) oriented matroids
\cite{goddynDM339}. An oriented matroid $O$ is $GSP$ if
every simple minor of $O$ has a $\{0,1,-1\}$-coflow with at most
two non-zero entries.  In the same work, the authors show that every $GSP$ oriented
matroid has a NZ $3$-coflow. 

\begin{proposition}\cite{goddynDM339}
Let $O$ be a $GSP$ oriented matroid. If $O$ has no loops, then 
$O$ has a NZ-$3$-coflow.
\end{proposition}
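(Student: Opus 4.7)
The plan is to prove the statement by induction on $|E|$, with the base case $|E| \le 1$ being trivial: a single element of a loopless matroid is a coloop, so we assign it the value $1$.

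For the inductive step, let $O$ be loopless and $GSP$ with $|E| \ge 2$. First I would dispose of the non-simple case: if $O$ has a parallel pair $\{e,f\}$ (a signed circuit of size two with sign $\eta$ on $f$), then $O \setminus f$ is loopless, still $GSP$ since $GSP$ is closed under minors, and smaller; by induction it carries a NZ-$3$-coflow $z$, and the extension $\tilde z(f) := -\eta\, z(e)$ lies in $\{\pm 1, \pm 2\}$ and remains a coflow of $O$ because parallel elements belong to exactly the same cocircuits. So we may assume $O$ is simple and invoke the $GSP$ hypothesis on $\operatorname{si}(O) = O$ to obtain a $\{0,1,-1\}$-coflow $y$ with $|\operatorname{supp}(y)| \le 2$.

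The next step is a structural observation: orthogonality of $y$ with every signed circuit of $O$ forces either $\operatorname{supp}(y) = \{e\}$ with $e$ a coloop of $O$, or $\operatorname{supp}(y) = \{e,f\}$ with $e$ and $f$ belonging to exactly the same circuits (with opposite signs). In the second situation, since $O$ is simple and a coloop would force its series partner to be a coloop too, $\{e,f\}$ is a genuine two-element cocircuit. The coloop subcase is immediate: delete the coloop $e$, apply induction to $O \setminus e$ to obtain $z$, and extend by $\tilde z(e) := 1$ (legitimate because $\chi_e$ is a signed cocircuit of $O$).

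The delicate step, and the one I expect to be the main obstacle, is the two-element cocircuit subcase; write $y = \chi_e + \epsilon\, \chi_f$ with $\epsilon \in \{\pm 1\}$. Rather than delete $f$ (which would leave the lifted $f$-coordinate uncontrolled in size), I would contract $e$: the minor $O/e$ is loopless (because $O$ is simple no element is parallel to $e$), still $GSP$, and smaller, so by induction it admits a NZ-$3$-coflow $z$ on $E \setminus e$. Extending by $\hat z(e) := 0$ yields a coflow of $O$, because every signed cocircuit of $O/e$ is a signed cocircuit of $O$ not containing $e$ and therefore lifts verbatim to $\mathbb{Z}^E$. Finally I modify $\hat z$ by $\alpha y$ for a suitable $\alpha \in \mathbb{Z}$: this changes only the $e$- and $f$-coordinates, sending them to $\alpha$ and $z(f) + \alpha \epsilon$, while leaving $z(g) \in \{\pm 1, \pm 2\}$ untouched for $g \notin \{e, f\}$. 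A short finite case check across $z(f) \in \{\pm 1, \pm 2\}$ and $\epsilon \in \{\pm 1\}$ shows some $\alpha \in \{\pm 1, \pm 2\}$ always works, producing the desired NZ-$3$-coflow. The crux of contracting $e$ rather than deleting $f$ is precisely that it forces $\hat z(e) = 0$, confining the shift to a bounded range of four candidate values, which is what keeps the final case analysis finite and uniform.
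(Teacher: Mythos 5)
The paper itself gives no proof of this proposition --- it is cited from \cite{goddynDM339} --- so I am judging your argument on its own terms. Your overall scheme (induct on $|E|$, strip parallel pairs, take the small-support $\{0,\pm1\}$-coflow $y$ granted by the $GSP$ hypothesis, contract inside its support, lift the inductive coflow by zero there, and then add an integer multiple of $y$) is sound, and the concluding finite case check is correct. The parallel-pair reduction is also fine: every signed cocircuit meets a two-element circuit in zero or two elements, with correlated signs, so the coflow lattice of $O\setminus f$ lifts exactly as you describe.

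The genuine gap is the ``structural observation''. A coflow of an oriented matroid is \emph{not} integrally orthogonal to its signed circuits; that is a feature of regular oriented matroids, and it is precisely what fails in general. Concretely, by part 1 of Lemma~\ref{lem:uniformflows} applied to the dual, every orientation of $U_{2,4}$ has $\chi_e$ as a coflow for \emph{every} element $e$, yet no element of $U_{2,4}$ is a coloop; likewise an orientation of $U_{2,5}$ has a $\{0,\pm1\}$-coflow supported on a two-element set that is not a cocircuit (part 2 of Lemma~\ref{lem:uniformflows} for $U_{3,5}$). So neither half of your dichotomy holds, and in particular the subcase $\operatorname{supp}(y)=\{e\}$ with $e$ \emph{not} a coloop is not covered: you treat single-element support only under the false assumption that $e$ is a coloop, and your deletion-plus-``$\chi_e$ is a signed cocircuit'' justification is unavailable otherwise. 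Fortunately the damage is local. Your two-element argument never actually uses that $\{e,f\}$ is a cocircuit --- only that $y$ is a coflow supported on $\{e,f\}$ --- so it stands as written; and the missing one-element subcase is handled verbatim by the same device: contract $e$ (loopless since $O$ is simple), lift the inductive coflow with $e$-coordinate $0$, and add $y=\pm\chi_e$. With the orthogonality claim deleted and that subcase supplied, the proof is complete.
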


Not much is known about $GSP$ oriented matroids. In particular, 
it is not known whether $GSP$ oriented matroids are closed under
duality. For this reason,  we introduce the dual class of generalized series
parallel oriented matroids. An oriented matroid $O$
is \textit{$coGSP$} if every cosimple minor of $O$ has 
 a $\{0,1,-1\}$-flow with at most two non-zero entries.

\subsection{Positive colines}

Let $M$ be a matroid. A \textit{copoint} of $M$ is a hyperplane, that is, 
a flat of codimension $1$. A \textit{coline} of $M$ is a flat of codimension $2$. 
If a coline $L$ is contained in a copoint $H$, we say that $H$ is a \textit{copoint on}
$L$. It is not hard to notice that if $L$ is a coline of a matroid $M$ then there
is a partition $(H_1,\dots, H_k)$ of $E(M)\setminus  L$ such that the copoints
on $L$ are $L\cup H_i$ for $i\in\{1,\dots k\}$. Furthermore, this partition is unique
(up to permutations) so we call it the \textit{copoint partition} of $L$, and
define the \textit{degree} of $L$ to be $k$. A class $H_i$ is \textit{singular}
if $|H_i| = 1$; otherwise it is a \textit{multiple} class.
A coline $L$ is \textit{positive} if there are more
singular that multiple classes  in its copoint partition.
It turns out that positive colines and $GSP$ oriented matroids are related as follows.

\begingroup
\def\thetheorem{\cite{goddynDM339}}
\begin{proposition}
Let $\mathcal{C}$ be minor closed class of orientable matroids. 
If every simple matroid in $\mathcal{C}$ has a positive coline, then
every orientation of a matroid in $\mathcal{C}$ is $GSP$.
\end{proposition}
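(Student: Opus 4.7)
The plan is to verify the $GSP$ property directly from its definition. Given any orientation $O$ of some matroid $M\in\mathcal{C}$ and any simple minor $O'$ of $O$, I will exhibit a $\{0,\pm 1\}$-valued coflow of $O'$ with at most two nonzero entries. Since $\mathcal{C}$ is minor-closed, the underlying matroid $M'$ of $O'$ is a simple matroid in $\mathcal{C}$, and by hypothesis it carries a positive coline $L$ whose copoint partition $(H_1,\dots,H_k)$ contains strictly more singular than multiple classes. Because $M'/L$ has rank two, $k\geq 2$, and positivity then forces at least $\lceil (k+1)/2\rceil\geq 2$ of the classes to be singular.

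The core of the argument is to reduce the construction to a rank-two minor. Contract $L$ to obtain the oriented matroid $O'/L$, whose simplification $U$ is an orientation of $U_{2,k}$. Coflows of $U$ are precisely flows of $U^{\ast}$, which is an orientation of $U_{k-2,k}$ with exactly $(k-2)+2$ elements, so Lemma~\ref{lem:uniformflows} applies to $U^{\ast}$. Pick elements of $U$ corresponding to singular classes, which is possible by the previous paragraph: when $k$ is even, the lemma produces a coflow of $U$ of value $\pm 1$ on a single chosen singular element; when $k$ is odd, it produces one with values $\pm 1$ supported on two chosen singular elements. In either case, this yields a $\{0,\pm 1\}$-valued coflow of $U$ of support size at most two.

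I then lift this coflow back to $O'$. Each signed cocircuit of $U$ lifts to a signed cocircuit of $O'/L$ by replacing each element by its full parallel class with the inherited sign, and signed cocircuits of $O'/L$ are exactly the signed cocircuits of $O'$ whose support avoids $L$. Consequently, a coflow $x$ of $U$ lifts to a coflow $\tilde{x}$ of $O'$ satisfying $\tilde{x}(h)=x(e)$ whenever $h\in H_e$. Since the support of $x$ was chosen to lie in singular classes, the support of $\tilde{x}$ coincides with that of $x$, producing the desired coflow of $O'$.

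The step requiring the most care is this lift: one must verify that, after a suitable reorientation within parallel classes, the signed cocircuits of $U$ genuinely pull back to signed cocircuits of $O'/L$ with consistent signs on each parallel class, so that the coflow relation is preserved through both simplification and contraction. This sign-consistency bookkeeping is what most threatens to derail an otherwise clean reduction; once it is in hand, the proposition is essentially a translation of Lemma~\ref{lem:uniformflows} into the language of colines.
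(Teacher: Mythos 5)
Your argument is correct and is, up to duality, the same as the machinery the paper develops: you contract the positive coline and read off a coflow of the resulting parallel extension of $U_{2,k}$ from Lemma~\ref{lem:uniformflows} applied to its dual $U_{k-2,k}$, which is exactly the double-circuit/flow argument of Observations~\ref{obs:even}--\ref{obs:odd} and Lemma~\ref{lem:doublecircuitS} restated on the cocircuit side. The sign-consistency you flag at the end is the standard fact that two parallel elements carry a fixed relative sign in every covector (forced by orthogonality with the two-element circuit they form), which is precisely what Observation~\ref{obs:even} invokes for coparallel elements, so there is no gap.
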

\addtocounter{theorem}{-1}
\endgroup

Using this proposition, Goddyn, Hochst\"attler, and Neudauer
\cite{goddynDM339} show that every orientation of a bicircular matroid
is $GSP$. Every bicircular matroid is a transversal matroid, and the class
of gammoids is the smallest dually and minor closed class that contains transversal 
matroids \cite{ingletonJCTB15}. These facts motivate Goddyn, Hochst\"attler,
and Neudauer to conjecture that every orientation of a gammoid is $GSP$. 
Furthermore, they conjecture that the following statement is true.

\begingroup
\def\thetheorem{\ref{conj:coline}}
\begin{conjecture}\cite{goddynDM339}
Every simple gammoid of rank at least two  has a positive coline.
\end{conjecture}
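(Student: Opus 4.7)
The plan is to \emph{disprove} this conjecture, since the abstract already signals that it is false. My first move is to restrict attention to strict gammoids: bicircular matroids are transversal (present $B(G)$ by the family of stars at the vertices of $G$), so their duals---the cobicircular matroids $B(G)^*$---are strict gammoids. Since \cite{goddynDM339} has already shown that every bicircular matroid has a positive coline, and positivity of colines is not self-dual in general, cobicircular matroids are a very natural place to hunt for counterexamples.

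My next step is to translate colines of $M^*=B(G)^*$ into the language of $G$. A subset $L\subseteq E(G)$ is a coline of $M^*$ iff $M^*/L=(M\setminus L)^*$ has rank $2$, iff $M\setminus L=B(G-L)$ has nullity $2$. Setting $H:=G-L$ and applying the standard bicircular rank formula
\[
 r_{B(H)}(E(H)) \;=\; |V(H)| \;-\; (\text{number of tree components of } H),
\]
this forces either (i) exactly one component of $H$ has cyclomatic number $3$ (the rest being trees or unicyclic), or (ii) exactly two components of $H$ have cyclomatic number $2$. Moreover, the copoints on $L$ biject with the series classes of $B(H)$, and a copoint is singular iff the corresponding series class has size one---equivalently, iff the underlying edge $e$ satisfies that $\{e,f\}$ is not a cocircuit of $B(H)$ for any $f\neq e$.

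The combinatorial goal is thus to exhibit an infinite family of graphs $G$ for which, across \emph{every} such coline, $B(H)$ has at least as many series classes of size $\ge 2$ as singleton series classes. I would begin with small, highly symmetric candidates (handcuff-type graphs or subdivided theta graphs, where bicircular cocircuit structure is computable by hand) to identify local configurations forcing large series classes in $B(H)$, then lift the construction to an infinite family by scaling a size parameter. The main obstacle will be the uniform quantifier over $L$: one must rule out positive colines for \emph{every} nullity-$2$ subgraph $H = G-L$, and series classes in bicircular matroids are considerably subtler than in the graphic case, depending on block and cut-vertex data of $H$, so the two cases (i) and (ii) each require their own structural analysis. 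Verifying that $B(G)^*$ is simple and of rank at least two (no coloops or series pairs in $B(G)$, and sufficiently many independent bicycles) will be a routine side check once the combinatorial model for $G$ is fixed.
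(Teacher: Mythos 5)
Your setup coincides with the paper's: you dualize, restrict to cobicircular matroids $B(G)^{*}$ (gammoids, since bicircular matroids are transversal), and translate colines of $B(G)^{*}$ into corank-$2$ restrictions of $B(G)$, i.e.\ double circuits of $B(G)$ whose circuit-partition classes are the series classes of the restriction and correspond to the copoints on the coline. This is exactly the reduction via Observations~\ref{obs:dcseries} and~\ref{obs:coline-doublec} to Conjecture~\ref{con:posdouble}. One small inaccuracy: nullity~$2$ of $B(G-L)$ alone does not make $L$ a coline; you also need $L$ to be closed, equivalently that every edge of $H=G-L$ lies in a circuit, which rules out tree and unicyclic components and pendant trees (the paper's ``no leaves'' condition in Lemma~\ref{lem:basic}).

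The genuine gap is that the proposal stops where the disproof begins: no counterexample is exhibited, and the step you yourself flag as ``the main obstacle''---controlling \emph{every} admissible $L$ simultaneously---is precisely what is missing. The two ideas you would need are the following. First, the degree of any double circuit of a bicircular matroid is at most $6$: a degree-$k$ double circuit restricts to a series extension of $U_{k-2,k}$, and Matthews' classification of uniform bicircular matroids caps $k$ at $6$; a handshake count then shows $G[D]$ has at most four branch vertices and that each subdivision class lies in a single partition class. This makes the quantification over $L$ a finite case analysis. Second, the clean sufficient condition: if $\girth(G)\ge 5$ then $B(G)$ has no positive double circuit, because in each case (degree $\le 4$ with all but one class singular; degree $5$ or $6$ with four singular classes; degree $5$ with three singular classes) the singular classes supply unsubdivided edges on at most four branch vertices and force a cycle of length at most $4$ in $G$. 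With that criterion the Petersen graph and the dodecahedron immediately yield cosimple, corank-$\ge 2$ bicircular matroids without positive double circuits, and their duals refute the conjecture. Your instinct to probe handcuff and theta subgraphs is reasonable exploration, but as written the argument is a research plan rather than a proof.
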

\addtocounter{theorem}{-1}
\endgroup

\subsection{Double circuits}

Circuits are the dual complements of hyperplanes. That is, if $H$ is a
hyperplane of a matroid $M$ then $E(M)\setminus  H$ is a circuit of $M^\ast$. 
A \textit{double circuit} of a matroid $M$ is a set $D$ such that $r(D) = |D|-2$
and for every element $d\in D$ the rank of $D-d$ does not decrease, i.e.\
$r(D-d) = |D| -2 = r(D)$. Dress and Lov\'asz \cite{dressC7} show that if $D$ is a
double circuit then $D$ has a partition $(D_1,\dots, D_k)$ such that the circuits
of $D$ are $D\setminus  D_i$ for $i\in\{1,\dots, k\}$. We call this partition the \textit{circuit
partition} of $D$ and say that the \textit{degree} of $D$ is $k$. 

\begin{observation}\label{obs:dcseries}
Let $M$ be a matroid and  $D\subseteq E(M)$ a double circuit of $M$.
If $D$ is a degree $k$ double circuit, then $M[D]$ is a series 
extension of $U_{k-2,k}$.
\end{observation}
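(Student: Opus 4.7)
The plan is to identify $M[D]$ as a series extension of $U_{k-2,k}$ by producing the series classes and the quotient explicitly.

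By the Dress--Lov\'asz theorem cited just above, the circuits of $M[D]$ are exactly the sets $D\setminus D_i$ for $i\in\{1,\dots,k\}$. First I would show that each $D_i$ is a series class of $M[D]$: for $d,d'\in D_i$, the set $D\setminus\{d,d'\}$ contains the circuit $D\setminus D_i$, giving the upper bound $r(D\setminus\{d,d'\})\le|D|-3$; the double-circuit hypothesis gives $r(D\setminus\{d\})=|D|-2$, hence the lower bound $r(D\setminus\{d,d'\})\ge|D|-3$. Equality, together with $r(D\setminus\{d\})=r(D\setminus\{d'\})=r(D)=|D|-2$, forces $\{d,d'\}$ to be a cocircuit of $M[D]$, so all of $D_i$ is a single series class.

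Next I would produce the quotient. Pick representatives $d_i\in D_i$, set $S=\{d_1,\dots,d_k\}$ and $T=D\setminus S$. Then $T$ is independent in $M[D]$ because no circuit $D\setminus D_i$ lies in $T$ (for any $j\ne i$ the representative $d_j\in D_j\subseteq D\setminus D_i$ is missing from $T$), so $M[D]/T$ has ground set $S$ and rank $(|D|-2)-|T|=k-2$. For each $i$, the set $(S\setminus\{d_i\})\cup T=D\setminus\{d_i\}$ contains the circuit $D\setminus D_i$, which forces $S\setminus\{d_i\}$ to be dependent in $M[D]/T$. A rank-$(k-2)$ matroid on a $k$-element set in which every $(k-1)$-subset is dependent must be $U_{k-2,k}$, so $M[D]/T=U_{k-2,k}$.

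These two ingredients, namely the series classes $D_1,\dots,D_k$ together with the quotient $M[D]/T=U_{k-2,k}$, are precisely what it means for $M[D]$ to be a series extension of $U_{k-2,k}$. The one place where the argument is non-mechanical is the rank estimate $r(D\setminus\{d,d'\})=|D|-3$ used to deduce that $\{d,d'\}$ is a cocircuit; I expect this short rank computation to be the main (still routine) obstacle.
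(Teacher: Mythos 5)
Your overall strategy is sound and differs from the paper's: the paper handles the base case where every $D_i$ is a singleton (there the circuits are all the $(k-1)$-subsets of $D$, so $M[D]\cong U_{k-2,k}$) and then runs a short induction on $|D|-k$, whereas you verify the series-extension structure directly by exhibiting the series classes and the contraction onto $U_{k-2,k}$. Your first step is correct: the rank computation $r(D\setminus\{d,d'\})=|D|-3$ together with $r(D\setminus\{d\})=r(D\setminus\{d'\})=r(D)$ shows that $D\setminus\{d,d'\}$ is a hyperplane of $M[D]$, hence $\{d,d'\}$ is a $2$-element cocircuit and the elements of each $D_i$ are pairwise in series. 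The choice of $T$, its independence, and the computation $r(M[D]/T)=k-2$ are also fine.

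The one genuine flaw is the justification of the last step: it is \emph{not} true that a rank-$(k-2)$ matroid on $k$ elements in which every $(k-1)$-subset is dependent must be $U_{k-2,k}$. For $k=4$, the matroid $U_{1,2}\oplus U_{1,2}$ has rank $2$, four elements, and every $3$-subset is dependent (each contains one of the two $2$-circuits), yet it is not $U_{2,4}$. You must also rule out dependent $(k-2)$-subsets of $S$, and your setup does this immediately: since $T$ is independent, the circuits of $M[D]/T$ are the minimal nonempty sets $C\setminus T$ over circuits $C$ of $M[D]$, i.e.\ the $k$ pairwise incomparable sets $(D\setminus D_i)\setminus T=S\setminus\{d_i\}$, so the circuits of $M[D]/T$ are exactly the $(k-1)$-subsets of $S$. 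Equivalently, $S\setminus\{d_i,d_j\}$ is independent in $M[D]/T$ because $D\setminus\{d_i,d_j\}$ contains no circuit $D\setminus D_l$ (that would force $\{d_i,d_j\}\subseteq D_l$, impossible for $i\neq j$). With that one-line repair the argument is complete and correct.
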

\begin{proof}
With out loss of generality suppose that $D = E(M)$, and let
$(D_1,\dots, D_k)$ be the circuit partition of $D$.  If $|D_i| = 1$ for every
$i\in\{1,\dots, k\}$, then $M \cong U_{k-2,k}$. Now, the claim follows 
by a straightforward induction over the difference  $|D| - k$.
\end{proof}

Similar to how circuits are the dual complements of copoints, 
double circuits are the complements of colines. Moreover, the 
copoint partitions and circuit partitions relate as follows.

\begin{observation}\label{obs:coline-doublec}
Let $M$ be a matroid, $L\subseteq E(M)$ and $(H_1,\dots H_k)$ a partition of
$E(M)\setminus  L$. Then, $L$ is  a coline of $M$ with copoint partition
$(H_1,\dots, H_k)$ if and only if $E(M)\setminus  L$ is a double circuit of $M^\ast$
with circuit partition $(H_1,\dots, H_k)$.
\end{observation}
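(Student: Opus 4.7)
My plan is to rely entirely on standard matroid duality. Two facts do almost all the work: (i) hyperplanes of $M$ are precisely the complements of circuits of $M^\ast$; and (ii) the rank formula $r_{M^\ast}(X)=|X|-r(M)+r_M(E\setminus X)$ holds for every $X\subseteq E(M)$. Setting $D:=E(M)\setminus L$, an application of (ii) with $X=D$ immediately shows that $r_M(L)=r(M)-2$ is equivalent to $r_{M^\ast}(D)=|D|-2$, so the two ``nullity $2$'' conditions match.

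Next, by (i), a hyperplane of $M$ containing $L$ has the form $L\cup H_i$ if and only if its complement $D\setminus H_i$ is a circuit of $M^\ast$ contained in $D$. Consequently the copoints of $M$ on $L$ and the circuits of $M^\ast$ contained in $D$ are in natural bijection via $H_i\mapsto D\setminus H_i$, and the putative copoint partition $(H_1,\dots,H_k)$ of $L$ in $M$ coincides (up to relabelling) with the putative circuit partition of $D$ in $M^\ast$. The only piece not handled by the rank formula and the hyperplane--circuit duality is the requirement that $L$ be a flat of $M$; for this I would invoke the dual characterisation that $L$ is a flat of $M$ iff $D$ is a union of cocircuits of $M$, equivalently of circuits of $M^\ast$. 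Since $(H_1,\dots,H_k)$ is a partition with $k\ge 2$, $\bigcap_i H_i=\emptyset$, so $D=\bigcup_i(D\setminus H_i)$ is exactly a union of the circuits identified above.

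The only genuine subtlety is checking $k\ge 2$ on both sides, which is needed to ensure the two partitions are nontrivial and the ``flat'' condition is automatic. On the coline side this holds because the copoints on $L$ together cover $E$ while any single copoint has rank $r(M)-1<r(M)$, so at least two copoints on $L$ are required. On the double-circuit side it holds because the empty set is not a circuit, so the Dress--Lov\'asz partition of $D$ must contain at least two classes. With this boundary case dispatched and uniqueness of both partitions in hand, the forward and backward directions close simultaneously.
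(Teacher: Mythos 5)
Your argument is correct. The paper states this observation without proof, so there is no official argument to compare against; your route through the standard duality facts (hyperplanes of $M$ are exactly the complements of circuits of $M^\ast$, the dual rank formula $r_{M^\ast}(X)=|X|-r(M)+r_M(E\setminus X)$, and the characterisation of flats as complements of unions of cocircuits) is precisely the natural way to fill the gap, and your handling of the $k\ge 2$ boundary case is sound. The one step you leave implicit is that the flat condition on $L$ also accounts for the second clause in the definition of a double circuit, namely $r_{M^\ast}(D-d)=r_{M^\ast}(D)$ for every $d\in D$: this follows because $D$ being a union of circuits of $M^\ast$ is equivalent to $D$ having no coloop in $M^\ast|D$, which is exactly the no-rank-drop condition, so it would be worth one sentence to say so.
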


A \textit{positive double circuit} $D$ is a double circuit with more singular
than multiple classes in its circuit partition. By Observation~\ref{obs:coline-doublec},
a matroid $M$ has a positive double circuit if and only if $M^\ast$
has a positive coline. Since gammoids are closed under duality, 
the following conjecture is equivalent to Conjecture~\ref{conj:coline}.

\begin{conjecture}\cite{goddynDM339}\label{con:posdouble}
Every cosimple gammoid of corank at least two  has a positive double circuit.
\end{conjecture}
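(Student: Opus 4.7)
My plan is to \emph{disprove} Conjecture~\ref{con:posdouble} rather than prove it, since the abstract announces an infinite family of counterexamples. The target is to exhibit cosimple gammoids of corank at least two whose every double circuit is non-positive, and I would look for these inside the class of cobicircular matroids $B(G)^{\ast}$. These are duals of transversal matroids and hence strict gammoids, so they do lie in the class considered by the conjecture.

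By Observation~\ref{obs:coline-doublec}, $B(G)^{\ast}$ has a positive coline if and only if $B(G)$ has a positive double circuit, and cosimplicity of $B(G)^{\ast}$ translates into a simple combinatorial condition on $G$. So the question becomes: construct simple graphs $G$ with sufficient cycle content so that every double circuit of $B(G)$ has at least as many multiple classes as singular classes in its circuit partition. By Observation~\ref{obs:dcseries}, a degree-$k$ double circuit $D$ of $B(G)$ restricts to a series extension of $U_{k-2,k}$; consequently the circuit partition of $D$ is precisely the partition of $D$ into maximal series classes of $B(G)|D$. The singular classes are edges of $D$ lying in no nontrivial series class, while the multiple classes correspond to chains of edges in $G$ that form series sets in $B(G)$.

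The program I would follow has three steps. First, characterize combinatorially when two edges of $G$ are in series in $B(G)$: plausibly, an internal vertex of degree two in the appropriate subgraph forces its two incident edges into a common series class, so that subdivision of an edge produces a series pair. Second, enumerate the minimal double circuits (degree $k=3$) of $B(G)$ in terms of concrete bicycle-like subgraphs, namely pairs of thetas, handcuffs and tight handcuffs sharing most of their edges. Third, take a small bicircular matroid whose double circuits are already balanced or close to balanced, and subdivide each remaining singular edge (in every potential double circuit) enough times to push it into a multiple class; the subdivision length provides an infinite parameter family.

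The main obstacle is verifying that subdivision does not create \emph{new} positive double circuits. Subdivision replaces an edge by a long path, splitting one element of $B(G)$ into a series chain; one then has to show that every double circuit of the subdivided matroid either arises by this splitting from a double circuit of the original (inheriting only multiple classes in the subdivided portion) or is entirely new but still non-positive. Once this subdivision-invariance is in place, choosing the base graph carefully (for example, two cycles joined at a vertex, a theta graph, or a short subdivision of $K_{4}$) should yield an infinite family of cosimple strict gammoids without positive colines, thereby disproving the conjecture.
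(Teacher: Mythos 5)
You are right that this conjecture is to be refuted, that the counterexamples should be bicircular, and that Observation~\ref{obs:coline-doublec} lets one pass freely between the coline and double-circuit formulations. But your concrete construction has a fatal flaw: subdividing an edge of $G$ creates a vertex of degree two, and the two edges incident with such a vertex form a series pair in $B(G)$ (every bicycle through one must pass through the other). Hence the subdivided $B(G)$ is \emph{not} cosimple, so it is not a counterexample to the statement ``every \emph{cosimple} gammoid of corank at least two has a positive double circuit''; and cosimplifying it just contracts your subdivisions away. This is exactly why the paper's Lemma~\ref{lem:circuitsbic} can assume minimum degree $3$ when $B(G)$ is cosimple. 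Your plan of ``subdividing each remaining singular edge enough times to push it into a multiple class'' therefore cannot produce admissible counterexamples, independently of the unresolved step you flag yourself (that subdivision creates no new positive double circuits).

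The paper's actual mechanism is different and avoids this entirely: it keeps minimum degree $3$ (so $B(G)$ stays cosimple) and instead imposes $\girth(G)\ge 5$. Theorem~\ref{thm:main} then shows $B(G)$ has no positive double circuit, via the bound $k\le 6$ on the degree of any double circuit (Corollary~\ref{cor:6}, from Matthews' list of uniform bicircular minors), the structural Lemma~\ref{lem:basic} (at most four distinguished vertices, and each subdivision class lies in one circuit class), and a short case analysis showing a positive double circuit would force a cycle of length at most $4$. The Petersen graph and the dodecahedron then give explicit infinite families. If you want to salvage your route, replace ``subdivide to kill singular classes'' by ``forbid short cycles so that unions of singular classes cannot contain the two cycles a bicircular circuit requires''; that is precisely the girth argument.
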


\section{Bicircular matroids and double circuits}
\label{sec:counter}

Every bicircular matroid is a transversal matroid~\cite{matthewsQJMOS28},
and so, every bicircular matroid is a gammoid. In this section,  we disprove 
Conjecture~\ref{conj:coline} by showing that its dual statement,
Conjecture~\ref{con:posdouble},
does not hold for bicircular matroids. To do so, we begin by briefly introducing
the class of bicircular matroids.

A standard reference for graph theory is \cite{bondy2008}. In particular, 
given a graph $G$ and a subset of edges $I$  we denote by
$G[I]$ the subgraph of $G$ induced by $I$. That is,  $G[I]$ is the subgraph
of $G$ with edge set $I$ and no isolated vertices.

Let $G$ be a (not necessarily simple) graph with vertex set $V$ and edge set $E$.
The \textit{bicircular matroid} of $G$ is the matroid $B(G)$ with base set $E$
whose independent sets are the edge sets $I\subseteq E$ such that  $G[I]$
contains at most one cycle in every connected component. 
Equivalently, the circuits of $B(G)$ are the edge sets of subgraphs which are
subdivisions of one of the graphs: two loops on the same vertex, two loops joined
by an edge, or three parallel edges joining a pair of vertices.

Mathews \cite{matthewsQJMOS28} noticed that there are only a few uniform
bicircular matroids. 

\begin{theorem}\label{thm:uniformbicircular}\cite{matthewsQJMOS28}
The uniform bicircular matroids are precisely the following:
\begin{itemize}
	\item $U_{1,n}$, $U_{2,n}$, $U_{n,n,}$ ($n\ge 0$);
	\item $U_{n-1,n}$ ($n\ge 1$);
	\item $U_{3,5}$, $U_{3,6}$ and $U_{4,6}$.
\end{itemize}
\end{theorem}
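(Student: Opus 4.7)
The plan is to establish both directions of the characterization. For the forward direction, I would exhibit an explicit graph realization of each member of the list: $U_{n,n}$ arises from any forest on $n$ edges (every subset has at most one cycle per component); $U_{1,n}$ from $n$ loops at a single vertex, where every pair of edges is a ``two loops on the same vertex'' circuit; $U_{2,n}$ from $n$ parallel edges between two vertices, where every triple of edges is a ``three parallel edges'' circuit; and $U_{n-1,n}$ from a theta graph, i.e.\ three internally disjoint paths between two vertices with $n$ edges in total (deleting any edge leaves a single cycle, hence an independent set). For the three sporadic matroids, small ad hoc graphs suffice; in particular, $K_{4}$ realizes $U_{4,6}$, since every $4$-edge subgraph of $K_{4}$ is connected and unicyclic, while each $5$-edge subgraph is $K_{4}$ minus an edge, namely a theta formed by two triangles sharing an edge. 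Analogous small multigraphs realize $U_{3,5}$ and $U_{3,6}$.

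For the converse, assume $B(G)\cong U_{r,n}$ with $n=|E(G)|$. Since $B(\cdot)$ commutes with disjoint unions and since a uniform matroid is connected unless $r\in\{0,n\}$, I may reduce to the case that $G$ is connected (after deleting isolated vertices). Bicircular matroids have no matroid loops, so $r\ge 1$. I would then do a case analysis on $r$: the case $r=n$ forces every component of $G$ to contain at most one cycle; the case $r=1$ forces every pair of edges to be a circuit, which given the three forbidden subdivision types leaves only a bouquet of loops; the case $r=2$ forces every triple of edges to be a theta, handcuff, or figure-eight subdivision while every pair is independent, which pins down a multigraph of $n$ parallel edges between two vertices; and the case $r=n-1$ says that $E(G)$ itself is a circuit whose every one-element deletion is a basis, ruling out the handcuff and figure-eight shapes and leaving the theta.

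The main obstacle is the intermediate range $3\le r\le n-2$, where uniformity is extremely rigid: every $r$-subset must simultaneously avoid every forbidden subdivision while every $(r+1)$-subset must contain one. I expect the argument here to proceed by a local analysis of vertex degrees, loops, and parallel edges in $G$, yielding an explicit upper bound on $n$ in terms of $r$, followed by an enumeration by hand of the (very few) graphs meeting the constraints. The punchline of this bounded search is that exactly three non-trivial matroids survive, namely $U_{3,5}$, $U_{3,6}$, and $U_{4,6}$. Verifying that no other graph in the intermediate range yields a uniform matroid --- and in particular that the sporadic list is complete --- is the technical heart of the argument.
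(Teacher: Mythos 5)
First, a framing point: the paper does not prove this theorem at all --- it is quoted verbatim from Matthews \cite{matthewsQJMOS28} --- so there is no in-paper argument to compare yours against; your proposal has to stand on its own. Judged that way, it verifies the routine half and the routine cases of the converse but leaves the actual content of the theorem unproven. The realizations you exhibit are correct: forests for $U_{n,n}$, a bouquet of loops for $U_{1,n}$, parallel edges for $U_{2,n}$, a theta for $U_{n-1,n}$, $B(K_4)\cong U_{4,6}$, and the doubled triangle (minus an edge) for $U_{3,6}$ and $U_{3,5}$. But in the converse direction the cases $r\in\{1,2,n-1,n\}$ are vacuous --- any uniform matroid of those ranks is already on the list, so there is nothing to prove there --- and the entire theorem lives in the range $3\le r\le n-2$. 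For that range you write only that you ``expect'' a local degree/loop/parallel-edge analysis to yield a bound on $n$ in terms of $r$ followed by a finite enumeration. No such bound is derived, no enumeration is performed, and no argument is given that ranks $r\ge 5$ are impossible or that $r\in\{3,4\}$ forces $n\le 6$. Since you yourself identify this as the technical heart of the argument, what you have is a correct statement of what remains to be done rather than a proof of it; this is a genuine gap, not a stylistic omission.

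A secondary, concrete error: the parenthetical claims that uniformity ``pins down'' the realizing graph in the small-rank cases are false. A loose or tight handcuff $G$ (two edge-disjoint cycles joined by a path, or meeting in a single vertex) also satisfies $B(G)\cong U_{n-1,n}$, because deleting any one edge leaves at most one cycle in each component of the resulting subgraph; so the $r=n-1$ case does not ``rule out the handcuff and figure-eight shapes.'' Likewise $U_{2,n}$ is realized by graphs other than $n$ parallel edges (for instance $n-1$ parallel edges together with a loop at one endpoint). Neither slip affects the truth of the theorem, which classifies the matroids rather than the graphs, but it shows that even the cases you do carry out are argued less carefully than they should be: in each of them the correct observation is simply that $U_{r,n}$ with $r\in\{1,2,n-1,n\}$ already appears on the list, with no need to identify $G$.
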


Recall that if a matroid $M$ has a double circuit of degree $k$ then 
$M$ contains a $U_{k-2,k}$ minor (Observation~\ref{obs:dcseries}).

\begin{corollary}\label{cor:6}
The  degree of a double circuit in a bicircular matroid is at most $6$.
\end{corollary}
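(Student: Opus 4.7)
The plan is to leverage Observation~\ref{obs:dcseries} together with the fact that bicircular matroids are closed under minors, reducing the problem to inspecting the classification of uniform bicircular matroids in Theorem~\ref{thm:uniformbicircular}.

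First I would let $D$ be a double circuit of degree $k$ in a bicircular matroid $M$ and apply Observation~\ref{obs:dcseries} to conclude that $M[D]$ is a series extension of $U_{k-2,k}$. Since bicircular matroids are trivially closed under deletion (just delete the corresponding edges in the underlying graph), $M[D]$ is itself bicircular. A series extension is undone by contracting each series class down to a single element, so $U_{k-2,k}$ is a contraction-minor of $M[D]$; using the (standard) fact that contracting an edge of a graph $G$ yields the bicircular matroid of the contracted graph, it follows that $U_{k-2,k}$ is a bicircular matroid.

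At this point a direct inspection of Theorem~\ref{thm:uniformbicircular} finishes the argument. The families $U_{n,n}$ and $U_{n-1,n}$ never have rank $k-2$ and size $k$ simultaneously, while $U_{1,n}$ and $U_{2,n}$ match $U_{k-2,k}$ only for $k=3$ and $k=4$ respectively. Among the sporadic matroids, $U_{3,5}$ gives $k=5$ and $U_{4,6}$ gives $k=6$, whereas $U_{3,6}$ fails the rank relation $k-2=3$, $k=6$. Thus $k \le 6$ in every case.

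The main (and essentially only) obstacle is invoking the closure of bicircular matroids under contraction so that $U_{k-2,k}$ inherits membership in the bicircular class; modulo this standard fact, the argument is a mechanical check against the uniform bicircular classification.
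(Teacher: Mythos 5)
Your argument is correct and is essentially the paper's own (largely implicit) proof: reduce via Observation~\ref{obs:dcseries} to a $U_{k-2,k}$ minor, invoke minor-closure of the bicircular class, and check the classification in Theorem~\ref{thm:uniformbicircular}. The only point worth polishing is that contraction of a graph loop does not literally yield $B(G/e)$ (one must pass to the standard description of minors of bicircular matroids), but the fact that the class is minor-closed is exactly the standard fact the paper relies on, so the proof stands.
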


Suppose that a graph $G$ is obtained by subdividing edges of a 
graph $H_G$ with minimum degree $3$. It is not hard to notice that
$H_G$ is unique up to isomorphism. The \textit{subdivision classes} of
$G$ are the sets of edges that correspond to a series of subdivisions of
an edge in $H_G$. An \textit{unsubdivided edge} of $G$ is an edge of $G$
that is an edge of $H_G$. Clearly, if $G$ has no
leaves an edge $xy$ is an unsubdivided edge of $G$
if and only if  $d_G(x),d_G(y)\ge 3$.

\begin{lemma}\label{lem:basic}
Let $G$ be a graph and $D\subseteq E$ a double circuit of $B(G)$.
Then $G[D]$ has no leaves and contains at most $4$ vertices
$x_1, x_2,x _3$ and $x_4$ of degree greater than or equal to $3$. Moreover,
every subdivision class of $G[D]$ belongs to the same class of the circuit partition
of $D$.
\end{lemma}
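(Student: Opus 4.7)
The plan is to work inside $B(G[D]) = B(G)|D$ and exploit that every bicircular circuit is the edge set of a subdivision of one of the three theta-type graphs (two loops at a vertex, two loops joined by an edge, three parallel edges), each of whose vertices has degree at least two.

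First, for the no-leaf property, I would use that the double-circuit condition $r(D-e) = r(D)$ says no $e \in D$ is a coloop of $B(G[D])$, so every edge of $G[D]$ lies in some circuit. Since every bicircular circuit's underlying subgraph has minimum vertex degree two, no leaf edge of $G[D]$ can appear in any circuit; hence $G[D]$ has no leaves.

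Next, for the alignment of subdivision classes with the circuit partition, I would pick two adjacent edges $e, f$ of a common subdivision class sharing a degree-two vertex $v$. Any circuit of $B(G[D])$ containing $e$ but not $f$ would leave $v$ with degree one in the circuit's subgraph, contradicting the previous paragraph. Iterating along the subdivided path (or cycle, if the class corresponds to a loop of $H_{G[D]}$) would show that $e$ and $f$ always co-occur in circuits, so each subdivision class sits inside a single class of the circuit partition.

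Finally, for the bound of four branch vertices, I would apply Observation~\ref{obs:dcseries} together with Corollary~\ref{cor:6} to conclude that $B(G[D])$ is a series extension of $U_{k-2,k}$ for some $k \leq 6$. Subdivision classes lie inside series classes, and both partitions have exactly $k$ blocks (the series classes biject with the $k$ elements of $U_{k-2,k}$, while the subdivision classes biject with the edges of $H_{G[D]}$), so $H_{G[D]}$ has $k \leq 6$ edges. Every vertex of $H_{G[D]}$ has degree at least three, since a would-be degree-two vertex (a single-loop vertex, or a pair of parallel edges between two otherwise isolated vertices) would force some edge of $G[D]$ to be a coloop, contradicting the double-circuit property. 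The handshake lemma then yields $3\,|V(H_{G[D]})| \leq 2|E(H_{G[D]})| \leq 12$, hence at most four vertices of $G[D]$ of degree $\geq 3$.

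The delicate step is the middle one: rigorously aligning subdivision classes with series classes uses that bicircular circuits have no degree-one vertices together with a counting argument to match the $k$ subdivision classes with the $k$ series classes. Once that is in hand, the rest is the handshake count and the classification of small uniform bicircular matroids that can arise as the base of the series extension.
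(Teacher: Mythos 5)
Your first two parts are sound and essentially reproduce the paper's argument: the double-circuit condition $r(D-e)=r(D)$ forbids coloops, so every edge of $D$ lies in a circuit, and since every bicircular circuit has minimum degree two, $G[D]$ has no leaves; likewise, the degree-two interior vertices of a subdivision class force any circuit meeting the class to contain all of it, which is exactly how the paper places each subdivision class inside one class of the circuit partition.

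The third part contains a genuine error. You assert that the subdivision classes of $G[D]$ coincide with the $k$ series classes of the series extension of $U_{k-2,k}$, hence that $H_{G[D]}$ has exactly $k\le 6$ edges. Only one inclusion holds: each subdivision class sits inside a series class (that is the ``moreover'' statement), but a series class can be a union of several subdivision classes, so the number of subdivision classes can strictly exceed $k$. Concretely, take the path $u\,e\,w\,f\,v$ with one loop attached at each of $u$, $w$ and $v$. The whole edge set is a double circuit of degree $3$ with circuit partition $\bigl(\{e,\ell_u\},\{f,\ell_v\},\{\ell_w\}\bigr)$ (here $e$ and $\ell_u$ are genuinely coparallel), yet every vertex has degree at least $3$, so there are five subdivision classes. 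Your claimed bijection is therefore false, the bound $|E(H_{G[D]})|\le 6$ is not established by it, and the handshake count on $H_{G[D]}$ does not go through as written. The repair is either the paper's route --- count degrees directly in $G[D]$, using that $|V(G[D])|=r(D)=|D|-2$ (every component contains a cycle because there are no leaves), so $2|D|\ge 3t+2(|D|-2-t)$ and hence $t\le 4$ --- or, if you want to argue on $H_{G[D]}$, observe that suppressing degree-two vertices preserves $|E|-|V|$, so $|E(H_{G[D]})|=|V(H_{G[D]})|+2$, which together with minimum degree three gives $3|V(H_{G[D]})|\le 2|V(H_{G[D]})|+4$ and the same conclusion.
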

\begin{proof}
Since every element of $D$ belongs to a circuit of $D$, then $D$ has no
coloops so $G[D]$ has no leaves.
Let $V'$ be the vertex set of $G[D]$ and $r'$ the rank of $D$, so $r'= |V'|$.
Since $D$ has no leaves then $d(x)\ge 2$ for every $e\in V'$. Let $t$ be
the number of vertices in $V'$ with degree at least $3$ in $G[D]$. 
By the handshaking lemma, $2|D| \ge 3t + 2(r'-t)$. Since  $D$ is a double circuit
then $r' = |D| -2$, and thus  $2(r'+2) \ge 3t + 2(r'-t)$, so $t \le 4$. 
Finally, let $S$ be a subdivision class of $G[D]$. Then $S$ is the edge set of a
path $P$ path such that the internal vertices of $P$ have degree $2$ in $G[D]$.
Thus, every cycle in $G[D]$ that contains an edge of $P$ contains all edges of $P$. 
Hence, every circuit of $B(G)$ in $D$ that contains an edge in $P$ contains
all of them, so $E(P)$ is contained  in some circuit class of $D$.
\end{proof}

Given a double circuit $D$ of a bicircular matroid $B(G)$, a
\textit{distinguished vertex} of $D$ is a vertex of degree at least $3$ in $G[D]$.
Since $G[D]$ has no leaves, the subdivision classes of $G[D]$ correspond to
paths that contain distinguished vertices (only) as endpoints. In particular,
unsubdivided edges of $G[D]$ are edges incident in distinguished vertices.

\begin{theorem}\label{thm:main}
Let $G$ be a graph. If $\girth(G) \ge 5$ then $B(G)$ has no positive double circuits.
\end{theorem}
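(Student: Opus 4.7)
The plan is to analyze the structure of $G[D]$ for any double circuit $D$ of $B(G)$ and to show in each case that there are at least as many multiple classes as singular ones in the circuit partition. The girth hypothesis will systematically preclude short cycles, allowing singular classes to be bounded.

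I would start with a rank calculation in $B(G)$. By Lemma~\ref{lem:basic}, $G[D]$ has no leaves, and any connected component of $G[D]$ with cyclomatic number at most $1$ is already an independent set in $B(G)$, making each of its edges a coloop of $M[D]$ and contradicting $r(D-e)=r(D)$. Hence every component has cyclomatic number at least $2$, and combining this with the bicircular rank formula $r(I) = |V(G[I])| - (\text{number of tree components of } G[I])$ and $r(D) = |D|-2$ leaves exactly two possibilities: (a) $G[D]$ is connected with cyclomatic number $3$, or (b) $G[D]$ has two components, each with cyclomatic number $2$. In case (b) the circuit partition has $k=2$ and each $D_i$ is the edge set of one component, a bicircular circuit of $B(G)$. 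With girth $\ge 5$ such a circuit is either a subdivided theta graph (three internally disjoint paths between two vertices with pairwise sums $\ge 5$, hence $\ge 8$ edges), a subdivided dumbbell (two cycles of length $\ge 5$ joined by a path, hence $\ge 11$ edges), or a bouquet of two cycles sharing a vertex (each cycle of length $\ge 5$, hence $\ge 10$ edges). Thus $|D_1|, |D_2|\ge 8 > 1$, so no class is singular.

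For case (a), I would pass to the topological graph $H_D$, obtained from $G[D]$ by smoothing degree-$2$ vertices. Its vertices are the distinguished vertices of $D$ and its edges correspond to the subdivision classes of $G[D]$. Since $H_D$ has cyclomatic number $3$ and minimum degree at least $3$, a handshake count gives $n:=|V(H_D)| \le 4$ and $|E(H_D)|=n+2$. The next step is to identify the circuit partition $(D_1,\ldots,D_k)$ with the partition of $E(G[D])$ into subdivision classes: this falls out of the proof of Observation~\ref{obs:dcseries}, since in any series extension of $U_{k-2,k}$ the circuit partition coincides with the series classes, and in a bicircular matroid the series classes of $M[D]$ are exactly the subdivision classes of $G[D]$. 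Consequently $k=n+2$, and a class $D_i$ is singular precisely when the corresponding edge of $H_D$ is an unsubdivided edge between two distinct distinguished vertices.

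I would then bound the number $s$ of singular classes using girth. Loops of $H_D$ correspond to cycles of $G[D]$ of length $\ge 5$, so are never singular; and since $G$ has no multi-edges, between any two distinguished vertices at most one parallel edge of $H_D$ has length $1$. Hence the singular classes correspond to edges of a simple graph $F$ on the $n \le 4$ distinguished vertices, and since $F$ is a subgraph of $G$ it inherits girth $\ge 5$. But a simple graph on at most $4$ vertices with girth $\ge 5$ cannot contain a cycle, so $F$ is a forest and $s \le n-1$. Combined with $k=n+2$, the inequality $s \le n-1 \le (n+2)/2 = k/2$ holds for every $n \in \{1,2,3,4\}$, so there are at least as many multiple as singular classes, and $D$ is not positive. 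The main obstacle is case (a): establishing the identification of the circuit partition with the subdivision classes of $G[D]$, and then cleanly applying girth to bound $s$; once this is in place, the arithmetic closes the argument uniformly.
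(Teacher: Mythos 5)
Your overall strategy (classify $G[D]$ via the bicircular rank formula, pass to the topological graph $H_D$, and count singular classes as a forest of unsubdivided edges) is attractive and genuinely different from the paper's proof, which instead bounds the degree of a double circuit by $6$ via the classification of uniform bicircular matroids and then runs a case analysis on the number of singular classes, with a contraction argument for the hardest case. However, your case (a) contains a genuine gap: the identification of the circuit partition of $D$ with the subdivision classes of $G[D]$, i.e.\ the claim $k=n+2$, is false. Lemma~\ref{lem:basic} only gives that each subdivision class is \emph{contained in} a circuit class; two distinct subdivision classes can be coparallel in $B(G[D])$ and hence merge into a single class. Concretely, let $H_D$ have vertices $u,v$, an edge $uv$, two loops at $u$ and one loop at $v$ (so $G[D]$ is two long cycles through $u$, a long cycle through $v$, and a $u$--$v$ path). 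The only circuits in $D$ are the figure-eight at $u$ and the two dumbbells, so $k=3$ while $n+2=4$: the $u$--$v$ path and the cycle at $v$ form one class. The same phenomenon occurs with $n=4$: take loops at $c$ and $d$, edges $ca$ and $db$, and a double edge $ab$; there $k=4$ while $n+2=6$.

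This is not merely cosmetic, because your closing inequality $s\le n-1\le (n+2)/2=k/2$ uses $k=n+2$ in an essential way. In the second example above your forest bound gives only $s\le n-1=3$, whereas the true $k$ is $4$, so you would need $s\le 2$; the argument as written does not deliver this. (The theorem is of course still true there -- the only candidates for singular classes are the two parallel $a$--$b$ edges, at most one of which can be unsubdivided when $\girth(G)\ge 5$, so in fact $s\le 1$ -- but that requires noticing that merged classes are never singular and that the singular candidates then live on fewer than $n$ vertices.) To repair the proof you would need to characterize when merging occurs (essentially, cut edges of $H_D$ leading into loops) and redo the count of singular classes relative to the true $k$ in those configurations. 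The rank-formula dichotomy, the handling of case (b), and the forest/girth observation are all correct and could be salvaged, but as it stands the argument does not close.
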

\begin{proof}
We proceed by contrapositive. Suppose that $D$ is a positive double circuit in $B(G)$
of degree $k$ with circuit partition $(D_1,\dots D_k)$ where $k\le 6$ by
Corollary~\ref{cor:6}.
If $k \in \{1,2,3,4\}$ with out loss of generality assume that $(D_1,\dots, D_{k-1})$
are singular circuit classes of $D$. Since the union $D'$ of these classes is $D\setminus D_k$, 
then this union is a circuit of $D$. Thus, $G[D']$ contains two cycles of $G$, so
 $\girth(G)\le |D'| \le 3$. 

Now suppose that $k \in\{5,6\}$ and $(D_1,\dots, D_k)$ has at least $4$ simple classes
$\{e_1\},\dots \{e_4\}$.  By the moreover  statement of Lemma~\ref{lem:basic}, the
edges $e_1$, $e_2$, $e_3$ and $e_4$ must be unsubdivided edges of $G[D]$. 
Thus, the endpoints of these edges are distinguished vertices of $G[D]$, which
by the same lemma there are at most $4$ of these vertices. Putting all of this 
together we conclude that $D'$ is a set of four edges such that $G[D']$ has
at most $4$ vertices. Therefore, $G[D']$ contains at least one cycle of
$G$, and so $\girth(G)\le |D'| \le 4$.

The only remaining case is when the degree of $D$ is $5$ and it has $3$ singular
classes. In this case there are $3$ unsubdivided edges $e_1,e_2,$ and $e_3$
of $G[D]$. We claim that  $\{e_1,e_2,e_3\}$ contains a cycle of $G$, and thus
$\girth(G)\le 3$. Anticipating a contradiction, suppose that $\{e_1,e_2,e_3\}$ does
not contain a cycle of $G$. This implies that $D$ has four distinguished
$x_1$, $x_2$, $x_3$ and $x_4$. Notice that if we contract a subdivided edge
$e$ of $G[D]$ we obtain
a double circuit $D'$ of $B(G)/e$ with the same degree as $D$. Inductively, we end
up with a double circuit $D_0$ and a graph $H$ with edge set $D_0$ such that 
$\{e_1,e_2,e_3\}\subseteq  D_0$ and $V(H) = \{x_1,x_2,x_3,x_4\}$. Moreover, 
$\{e_1,e_2,e_3\}$ spans a tree of $H$. On the other hand,
each edge $e_i$ for $i\in\{1,2,3\}$ belongs to a singular class of the circuit partition
of $D_0$. Since the rank of $D_0$ is $4$ then
$D_0$ contains at most six edges. Also, the circuit partition of $D_0$ has $5$ classes, 
so there must be an edge  $e_4\in D_0\setminus\{e_1,e_2,e_3\}$ that belongs to 
a singular class. Notice that that $\{e_1,e_2,e_3,e_4\}$ contains at most one
cycle of $H$ since $\{e_1,e_2,e_3\}$ spans a tree of $H$. On the other hand,
$D_0\setminus \{e_1,e_2,e_3,e_4\}$ is a class of $D_0$, so
$\{e_1,e_2,e_3,e_4\}$ is a circuit of $D_0$, i.e.\ $\{e_1,e_2,e_3,e_4\}$ contains
two cycles of $H$. We arrive at this contradiction by assuming that
$\{e_1,e_2,e_3\}$ does not contain a cycle of $G$,  thus $\girth(G)\le 3$, and
the theorem follows.
\end{proof}

This statement yields a large class $\mathcal{C}$ of bicircular matroids with no positive
double circuits.  For instance, if $G$ is the dodecahedron graph and $P$ the
Petersen graph (Figure~\ref{fig:petdod}) then $B(G)$
and $B(P)$ are cosimple bicircular matroids of corank at least two  that do not
have positive double circuits. Dually, $B(G)^\ast$
and $B(P)^\ast$ are simple matroids of rank at least two  that do not
have positive colines.


\begin{figure}[ht!]
\begin{center}

\begin{tikzpicture}[scale=0.6]

\begin{scope}[xshift=-6cm, yshift = 0cm, scale=0.7]
\node [vertex] (0) at (0:3){};
\node [vertex] (1) at (288:3){};
\node [vertex] (2) at (216:3){};
\node [vertex] (3) at (144:3){};
\node [vertex] (4) at (72:3){};

\node [vertex] (10) at (0:5){};
\node [vertex] (11) at (288:5){};
\node [vertex] (12) at (216:5){};
\node [vertex] (13) at (144:5){};
\node [vertex] (14) at (72:5){};

\foreach \from/\to in {0/2, 0/3, 1/3, 1/4, 2/4}
\draw [edge] (\from) to (\to);
\foreach \from/\to in {10/11, 12/13, 11/12, 13/14, 10/14}
\draw [edge] (\from) to (\to);
\foreach \from/\to in {0/10, 1/11, 2/12, 14/4, 3/13}
\draw [edge] (\from) to (\to);

\end{scope}

\begin{scope}[xshift=6cm, yshift=0cm, scale=0.6]
\node [vertex] (0) at (0:2){};
\node [vertex] (1) at (288:2){};
\node [vertex] (2) at (216:2){};
\node [vertex] (3) at (144:2){};
\node [vertex] (4) at (72:2){};

\node [vertex] (01) at (0:4){};
\node [vertex] (101) at (36:5){};
\node [vertex] (41) at (72:4){};
\node [vertex] (141) at (108:5){};
\node [vertex] (31) at (144:4){};
\node [vertex] (131) at (180:5){};
\node [vertex] (21) at (216:4){};
\node [vertex] (121) at (252:5){};
\node [vertex] (11a) at (288:4){};
\node [vertex] (111) at (324:5){};

\node [vertex] (10) at (36:7){};
\node [vertex] (11) at (324:7){};
\node [vertex] (12) at (252:7){};
\node [vertex] (13) at (180:7){};
\node [vertex] (14) at (108:7){};

\foreach \from/\to in {0/1, 2/3, 1/2, 3/4, 0/4}
\draw [edge] (\from) to (\to);
\foreach \from/\to in {10/11, 12/13, 11/12, 13/14, 10/14}
\draw [edge] (\from) to (\to);
\foreach \from/\to in {101/10, 111/11, 121/12, 141/14, 131/13}
\draw [edge] (\from) to (\to);
\foreach \from/\to in {01/0, 11a/1, 21/2, 41/4, 31/3}
\draw [edge] (\from) to (\to);

\foreach \from/\to in {01/101, 101/41, 41/141, 141/31, 31/131, 131/21, 21/121, 121/11a,
				11a/111, 111/01}
\draw [edge] (\from) to (\to);

\end{scope}

\end{tikzpicture}

\caption{The Petersen graph and the dodecahedron.}
\label{fig:petdod}
\end{center}
\end{figure}
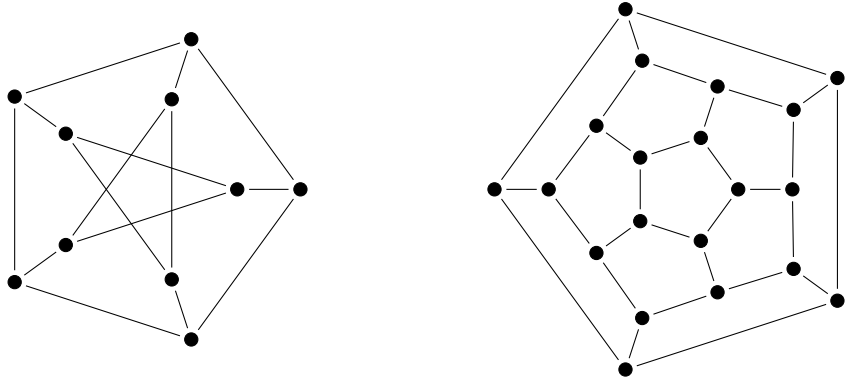

Recall that every transversal matroid is a gammoid, and since bicircular matroids
are transversal matroids~\cite{matthewsQJMOS28}, Theorem~\ref{thm:main} 
yields a class of cosimple gammoids of corank at least two  that do not have
positive double circuits.

\begin{corollary}\label{cor:counter}
Not every cosimple  gammoid of corank at least two has a positive double
circuit. Equivalently, not every simple gammoid of rank at least two has a
positive coline.
\end{corollary}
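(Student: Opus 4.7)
The plan is to exhibit an explicit witness. Take $G$ to be the Petersen graph (or, equivalently, the dodecahedron), both of which have girth~$5$. By Theorem~\ref{thm:main}, the bicircular matroid $B(G)$ has no positive double circuit, so the task reduces to checking that $B(G)$ is a cosimple gammoid of corank at least two.

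For the corank, the Petersen graph has $10$ vertices and $15$ edges and is connected with cycles, so $B(G)$ has rank $10$ and corank $5 \geq 2$; the dodecahedron works analogously. For cosimplicity, I would argue directly from the bicircular structure: because $G$ is $3$-regular and $3$-edge-connected with girth~$5$, every edge lies in some bicircular circuit (so $B(G)$ has no coloops), and no pair of edges $\{e,f\}$ can form a $2$-element cocircuit of $B(G)$, since $G - \{e,f\}$ is still connected and cyclic and hence $B(G)$ has the same rank as $B(G - \{e,f\})$. Then I would invoke Matthews' result~\cite{matthewsQJMOS28} that every bicircular matroid is transversal, combined with the containment of transversal matroids in the class of gammoids~\cite{ingletonJCTB15}, to conclude that $B(G)$ is indeed a gammoid. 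This establishes the first form of the corollary.

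The equivalent ``simple / positive coline'' formulation follows from duality: the class of gammoids is closed under matroid duality, so $B(G)^\ast$ is a gammoid whose rank equals the corank of $B(G)$ and hence is at least two, and simplicity of $B(G)^\ast$ is by definition cosimplicity of $B(G)$. By Observation~\ref{obs:coline-doublec}, the absence of a positive double circuit in $B(G)$ is precisely the absence of a positive coline in $B(G)^\ast$, closing the argument. The only step that requires any genuine work is the verification that $B(G)$ is cosimple for the chosen $3$-regular girth-$5$ graph, and I expect this to be the main (quite minor) obstacle; the remaining implications are immediate consequences of results already established in the paper.
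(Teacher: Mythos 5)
Your proposal is correct and follows essentially the same route as the paper: apply Theorem~\ref{thm:main} to the Petersen graph (or dodecahedron), note that bicircular matroids are transversal and hence gammoids, and dualize via Observation~\ref{obs:coline-doublec}. The only difference is that you spell out the cosimplicity and corank verifications explicitly, which the paper simply asserts; your argument for these (3-edge-connectivity ensures deleting any two edges leaves the rank of the bicircular matroid unchanged) is sound.
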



\section{Subclasses of $GSP$ oriented matroids}
\label{sec:GSP}

In this section, we show that if a matroid $M$ has a double circuit
with two singular classes, then every orientation of $M$ has a
$\{0,1,-1\}$-flow with at most two non zero entries. In particular, 
we will use this observation to show that oriented cobicircular matroids
are $GSP$.

The flow lattice or oriented uniform matroids is characterized in
\cite{hochstattlerCDM2}. The properties of these lattices that are
interesting for this work are stated in Lemma~\ref{lem:uniformflows}.
The following observation is an implication of the first part of
Lemma~\ref{lem:uniformflows}. 

\begin{observation}\label{obs:even}
Let $O$ be an oriented matroid with underlying matroid $M$. If $M$ is a series
extension of a uniform matroid of even rank, then for any series class
$C\subseteq E(M)$ there is a $\{0,1,-1\}$-flow $F$ of $O$, such that
$|F(e)| = 1$ if and only if $e\in C$. 
\end{observation}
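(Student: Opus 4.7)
Write $M$ as a series extension of $U_{r,n}$ with $r$ even, and let $C$ be the series class of $M$ corresponding to a fixed element $e_0$ of $U_{r,n}$. My plan is threefold: (i) realise an orientation of $U_{r,n}$ from $O$ by contracting each series class down to a single representative, (ii) invoke Lemma~\ref{lem:uniformflows}(1) on that contraction to obtain a flow supported at $\{e_0\}$, and (iii) lift this flow back through the series extension so that the values concentrate on $C$, with signs dictated by the series structure.

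For (i), I would fix a transversal $T$ of the series classes of $M$ with $e_0 \in T$ and set $O' := O / (E(M) \setminus T)$. Since contracting all-but-one elements of each series class undoes the corresponding series subdivisions, $O'$ is an orientation of $U_{r,n}$ on ground set $T$. Step (ii) is then immediate: as $r$ is even and $n \geq r + 2$ in the applications of interest (those coming from Observation~\ref{obs:dcseries}), Lemma~\ref{lem:uniformflows}(1) supplies $x' \in \mathcal{F}_{O'}$ with $x'(e_0) = 1$ and $x'(f) = 0$ for every $f \in T \setminus \{e_0\}$.

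For (iii), within each series class every pair of elements forms a signed cocircuit, with signs fixed by a reference orientation $\sigma$ on $E(M)$ (normalised so $\sigma \equiv +1$ on $T$). Circuit--cocircuit orthogonality then forces every signed circuit of $O$ to either miss a series class or to contain all of it, with signs of the form $\epsilon \cdot \sigma(\cdot)$ for a common $\epsilon \in \{\pm 1\}$. This yields a lift: writing $x' = \sum_i \lambda_i C'_i$ with $C'_i$ signed circuits of $O'$, I would set $F := \sum_i \lambda_i \widetilde{C'_i}$, where $\widetilde{C'_i}$ is the signed circuit of $O$ obtained by copying $C'_i$ onto $T$ and, for each series class $C^\ast$ whose representative $e^\ast$ lies in $C'_i$, assigning each $c \in C^\ast$ the sign $C'_i(e^\ast) \cdot \sigma(c)$. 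A direct check then gives $F(c) = \sigma(c) \cdot x'(e^\ast)$ for $c \in C^\ast$, so $F$ vanishes outside $C$ and $|F(c)| = 1$ for every $c \in C$. The main obstacle is precisely this sign bookkeeping in (iii): one must verify that the lift is well-defined on the level of flow lattices rather than just circuit-by-circuit, and that the magnitudes on $C$ really are all $1$. This is routine once the oriented-matroid description of series extension is set up, but it must be carried out carefully.
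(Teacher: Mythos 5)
Your proposal is correct and follows essentially the same route as the paper: the paper's (much terser) proof likewise combines Lemma~\ref{lem:uniformflows}(1) with the fact that coparallel elements have correlated signs in every signed circuit, which is exactly your contraction-and-lift argument spelled out in detail. Your aside that one needs $n \ge r+2$ (guaranteed in the intended applications via Observation~\ref{obs:dcseries}) is a fair observation about a hypothesis left implicit in the statement, and the paper's proof relies on it in the same way.
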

\begin{proof}
The claim follows from the fact that for each pair $e$ and $f$ of coparallel
elements of $M$, the sign of $e$ and $f$ are either the same in every circuit
of $O$, or  are opposite in every circuit of $O$. This, 
part $1$ of Lemma~\ref{lem:uniformflows},  proves the claim.  
\end{proof}

With similar arguments to the ones in the previous proof,
and using part 2 of  Lemma~\ref{lem:uniformflows},
we conclude that the following statement holds.

\begin{observation}\label{obs:odd}
Let $O$ be an oriented matroid with underlying matroid $M$. If $M$ is a series
extension of a uniform matroid of odd rank, then for any pair $C_1$ and $C_2$ of
series classes of $M$,  there is a $\{0,1,-1\}$-flow $F$ of $O$, such that
$|F(e)| = 1$ if and only if $e\in C_1\cup C_2$. 
\end{observation}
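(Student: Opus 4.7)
The plan is to imitate the proof of Observation~\ref{obs:even} verbatim, replacing part~$1$ of Lemma~\ref{lem:uniformflows} with part~$2$. Let $N$ be the underlying uniform matroid of odd rank, let $(C_1,\dots,C_m)$ be the series-class partition of $E(M)$, and let $O'$ be the orientation of $N$ obtained from $O$ by contracting all but one element in each $C_i$; fix representatives $e\in C_1$ and $f\in C_2$ (viewed both as elements of $M$ and of $N$).

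Applying part~$2$ of Lemma~\ref{lem:uniformflows} to $O'$ and the pair $e,f$ yields a flow $x\in\mathcal{F}_{O'}$ with $|x(e)|=|x(f)|=1$ and $x(e')=0$ for every other representative $e'$. I would then lift $x$ to a flow $F\in\mathcal{F}_O$ supported precisely on $C_1\cup C_2$, using the same structural fact already invoked for Observation~\ref{obs:even}: any two coparallel elements of $M$ have either identical signs throughout or opposite signs throughout every signed circuit of $O$. Hence each signed circuit $D$ of $O'$ lifts canonically to a signed circuit $\tilde D$ of $O$ that is constant on each series class it meets. Writing $x=\sum_D \lambda_D D$ as an integer combination of signed circuits of $O'$, the vector $F:=\sum_D\lambda_D\tilde D$ lies in $\mathcal{F}_O$ and satisfies $F(e')=\varepsilon_{e',i}\,x(e_i)$ for every $e'\in C_i$, where $\varepsilon_{e',i}\in\{+1,-1\}$ is the (circuit-independent) sign comparison of $e'$ with the representative $e_i$ of $C_i$. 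Thus $F$ takes values in $\{0,1,-1\}$ and $|F(e')|=1$ precisely when $e'\in C_1\cup C_2$.

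The only point requiring a line of justification, rather than a genuine obstacle, is this sign-consistent lift of circuits from $O'$ back to $O$; but that is exactly the coparallel sign-consistency already used for Observation~\ref{obs:even}, so no new ingredient enters beyond swapping part~$1$ of Lemma~\ref{lem:uniformflows} for part~$2$.
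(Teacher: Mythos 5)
Your proposal is correct and matches the paper's intended argument exactly: the paper's own proof of this observation is a one-line remark that the argument of Observation~\ref{obs:even} goes through with part~2 of Lemma~\ref{lem:uniformflows} in place of part~1, and your write-up simply spells out that lift-via-coparallel-sign-consistency in full. (Both arguments implicitly rely on the hypothesis of Lemma~\ref{lem:uniformflows} that the uniform matroid has at least $r+2$ elements; this is harmless here since in the paper's application the relevant restriction is a series extension of $U_{k-2,k}$.)
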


In Observation~\ref{obs:dcseries} we argued that if $D$ is a double circuit in a
matroid $M$,  then $M[D]$ is a series extension of some uniform matroid. 
This statement together with Observations~\ref{obs:even} and~\ref{obs:odd}
imply the following statement. 

\begin{lemma}\label{lem:doublecircuitS}
Let $O$ be an oriented matroid with underlying matroid $M$.  If $D$ is
a double circuit of $M$ such that:
\begin{itemize}
	\item $D$ has even degree and at least one singular class, or
	\item $D$ has odd degree and at least two singular classes,
\end{itemize}
then $O$ has a $\{0,1,-1\}$-flow with at most two non-zero entries.
\end{lemma}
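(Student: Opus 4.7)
The plan is to work inside the double circuit $D$ and lift the resulting flow to the whole ground set by extending by zero. Let $k$ denote the degree of $D$ and let $(D_1,\dots,D_k)$ be its circuit partition. By Observation~\ref{obs:dcseries}, $M[D]$ is a series extension of $U_{k-2,k}$, and a short preliminary remark I would record is that the $D_i$ are precisely the series classes of this extension; this is implicit in the inductive construction in the proof of Observation~\ref{obs:dcseries}, where elements are peeled off one at a time inside a single $D_i$.

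Next I would split on the parity of $k$. If $k$ is even, then $U_{k-2,k}$ has even rank $k-2$, so Observation~\ref{obs:even} applied to the restricted oriented matroid $O|D$ and to a singular class $\{e\}$ supplied by the hypothesis yields a $\{0,1,-1\}$-flow of $O|D$ whose support is exactly $\{e\}$, i.e.\ a single non-zero entry. If $k$ is odd, then $U_{k-2,k}$ has odd rank $k-2$, and Observation~\ref{obs:odd} applied to $O|D$ and to two singular classes $\{e_1\}$, $\{e_2\}$ guaranteed by the hypothesis produces a $\{0,1,-1\}$-flow of $O|D$ whose support is exactly $\{e_1,e_2\}$. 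In both cases, the support has cardinality at most two.

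Finally, I would lift the flow of $O|D$ to a flow of $O$ by zero-padding outside $D$. This is legitimate because signed circuits of $O|D$ are precisely the signed circuits of $O$ that are contained in $D$; hence any integer combination of signed vectors of circuits of $O|D$, viewed in $\{0,\pm 1\}^{E(M)}$ by inserting zeros off $D$, is still an integer combination of signed circuits of $O$ and therefore lies in $\mathcal{F}_O$.

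I do not anticipate any substantive obstacle: the hard content is already carried out by Lemma~\ref{lem:uniformflows} and its two consequences (Observations~\ref{obs:even} and~\ref{obs:odd}). What remains is essentially a packaging step that combines the double-circuit-to-series-extension reduction with the parity dichotomy for uniform flow lattices; the only minor point that needs a sentence of justification is the identification of the circuit partition of $D$ with the series classes of $M[D]$.
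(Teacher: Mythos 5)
Your proof is correct and follows the same route the paper takes: the paper derives the lemma directly from Observation~\ref{obs:dcseries} together with Observations~\ref{obs:even} and~\ref{obs:odd}, leaving the restriction-to-$D$, the identification of the circuit partition classes with the series classes, and the zero-padding implicit. Your write-up simply makes those routine steps explicit.
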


As previously mentioned, Goddyn, Hochst\"attler and Neudauer, 
show that finding positive double circuits in certain matroids
imply that the orientations of these matroids are $coGSP$ \cite{goddynDM339}. 
Lemma~\ref{lem:doublecircuitS} yields a simple tool to improve the 
previous sufficient condition as follows.

\begin{proposition}\label{prop:sufCOGSP}
Let $\mathcal{C}$ be a minor closed class of orientable matroids. 
If every cosimple matroid in $\mathcal{C}$ has a double circuit $D$ such that:
\begin{itemize}
	\item $D$ has even degree and at least one singular class, or
	\item $D$ has odd degree and at least two singular classes,
\end{itemize}
then every orientation of a matroid in $\mathcal{C}$ is $coGSP$.
\end{proposition}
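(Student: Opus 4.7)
The plan is to reduce the claim almost directly to Lemma~\ref{lem:doublecircuitS} using the minor-closure hypothesis. Let $O$ be an arbitrary orientation of some $M\in\mathcal{C}$. To show that $O$ is $coGSP$, I take an arbitrary cosimple minor $O'$ of $O$ and must exhibit a $\{0,1,-1\}$-flow of $O'$ with at most two non-zero entries. Write $M'$ for the underlying matroid of $O'$. Then $M'$ is a minor of $M$, so by hypothesis $M'\in\mathcal{C}$; and $M'$ is cosimple because $O'$ is. The assumption on $\mathcal{C}$ therefore produces a double circuit $D$ of $M'$ whose degree/singularity parities match one of the two bullets in the statement.

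Once $D$ is in hand, Lemma~\ref{lem:doublecircuitS} applied to $O'$ and $D$ yields precisely a $\{0,1,-1\}$-flow of $O'$ with at most two non-zero entries. Since $O'$ was an arbitrary cosimple minor of $O$, this verifies the defining property of $coGSP$ for $O$. The main (and only) things to be careful about are the two bookkeeping checks that make the reduction to Lemma~\ref{lem:doublecircuitS} legitimate: first, that the operation of passing to a cosimple minor of an oriented matroid commutes with taking the underlying matroid, which is immediate from the definitions of deletion, contraction, and cosimplification for oriented matroids; and second, that Lemma~\ref{lem:doublecircuitS} is applied with $O'$ (not $O$) in the role of the ambient oriented matroid, so that the flow produced lives in $O'$ itself. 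No genuinely hard step is involved, and the resulting proof is just a few lines.
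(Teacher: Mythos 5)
Your proof is correct and follows exactly the route the paper intends: the paper states Proposition~\ref{prop:sufCOGSP} as an immediate consequence of Lemma~\ref{lem:doublecircuitS} without writing out the details, and your argument simply makes explicit the routine reduction (passing to a cosimple minor $O'$, noting its underlying matroid is a cosimple member of the minor-closed class $\mathcal{C}$, and applying the lemma to $O'$). The two bookkeeping points you flag are indeed the only things to check, and both are handled correctly.
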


For the sake of completeness, we state the dual version of
Proposition~\ref{prop:sufCOGSP}.

\begingroup
\def\thetheorem{(Dual version of Proposition~\ref{prop:sufCOGSP})}
\begin{proposition}
Let $\mathcal{C}$ be a minor closed class of orientable matroids.
If every simple matroid in $\mathcal{C}$ has a coline $L$ such that:
\begin{itemize}
	\item there is a simple copoint on $L$ and $L$ has even degree, or
	\item there is a pair of simple copoints on $L$ and $L$ has odd degree,
\end{itemize}
then every orientation of a matroid in $\mathcal{C}$ is $GSP$.
\end{proposition}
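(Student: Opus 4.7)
The plan is to derive the statement as a formal dual of Proposition~\ref{prop:sufCOGSP}, exploiting the fact that $GSP$ and $coGSP$ are dual notions and that Observation~\ref{obs:coline-doublec} exchanges colines with double circuits of the dual matroid.

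First, I would pass to the dual class $\mathcal{C}^\ast := \{M^\ast : M \in \mathcal{C}\}$. Since matroid duality interchanges deletion and contraction and is involutory, $\mathcal{C}^\ast$ is minor closed; likewise, $M$ is simple if and only if $M^\ast$ is cosimple. Next, by Observation~\ref{obs:coline-doublec}, a coline $L$ of a simple $M \in \mathcal{C}$ with copoint partition $(H_1,\dots,H_k)$ corresponds to a double circuit $D := E(M) \setminus L$ of the cosimple matroid $M^\ast \in \mathcal{C}^\ast$ with circuit partition $(H_1,\dots,H_k)$. In particular, the degree of $L$ equals the degree of $D$, and the existence of a simple copoint $L \cup H_i$ on $L$ (meaning a copoint with $|H_i| = 1$) is precisely the assertion that $H_i$ is a singular class of the circuit partition of $D$. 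Hence the two bulleted hypotheses of the statement translate verbatim to the two bulleted hypotheses of Proposition~\ref{prop:sufCOGSP} applied to $\mathcal{C}^\ast$.

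Applying Proposition~\ref{prop:sufCOGSP} to $\mathcal{C}^\ast$ then yields that every orientation of a matroid in $\mathcal{C}^\ast$ is $coGSP$. The final step is to translate this back to $\mathcal{C}$: an oriented matroid $O$ is $GSP$ if and only if its dual $O^\ast$ is $coGSP$, since simple minors of $O$ are exactly duals of cosimple minors of $O^\ast$, and $\{0,1,-1\}$-coflows of $O$ with at most two non-zero entries are identified, under duality, with $\{0,1,-1\}$-flows of $O^\ast$ of the same support. Because every orientation of $M \in \mathcal{C}$ is the dual of an orientation of $M^\ast \in \mathcal{C}^\ast$, the proposition follows. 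No substantive obstacle is expected; the only point that requires care is the definitional match between simple copoints on $L$ and singular classes in the circuit partition of $D$, which is immediate from the bijection of partitions supplied by Observation~\ref{obs:coline-doublec}.
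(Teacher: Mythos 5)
Your proposal is correct and matches the paper's approach: the paper states this result purely as the formal dual of Proposition~\ref{prop:sufCOGSP}, which is exactly the dualization you carry out (via Observation~\ref{obs:coline-doublec}, the identification of simple copoints with singular classes, and the fact that $O$ is $GSP$ if and only if $O^\ast$ is $coGSP$). The paper offers no further argument, so your write-up simply makes explicit what the paper leaves implicit.
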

\addtocounter{theorem}{-1}
\endgroup

In the subsections below, we will use these propositions to show that certain
classes of oriented matroids are $GSP$ ($coGSP$). To do so, we begin by 
proving the following lemma which guarantees the existence of double
circuits with at least two singular classes. 

\begin{lemma}\label{lem:basicsuf}
For any matroid $M$ the following statements are equivalent:
\begin{itemize}
	\item there is a double circuit $D\subseteq M$ with at least two singular classes, and
	\item there is a pair $C_1$ and $C_2$ of circuits such that 
	$|C_1\triangle C_2| = 2$. 
\end{itemize}
\end{lemma}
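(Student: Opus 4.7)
The plan is to handle the two directions separately. The forward implication essentially restates the Dress--Lov\'asz description of a double circuit, while the backward direction is where all the (mild) work lies.

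For the forward direction, I would start from a double circuit $D$ with circuit partition $(D_1,\dots,D_k)$ and two singular classes $D_1=\{e_1\}$, $D_2=\{e_2\}$. Since the circuits of $M[D]$ are exactly the complements $D\setminus D_i$, the sets $C_1:=D\setminus\{e_1\}$ and $C_2:=D\setminus\{e_2\}$ are circuits of $M$, and their symmetric difference is $\{e_1,e_2\}$, so $|C_1\triangle C_2|=2$.

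For the converse, given circuits $C_1,C_2$ with $C_1\triangle C_2=\{e_1,e_2\}$, I would set $D:=C_1\cup C_2$. Note that $e_i\in C_i\setminus C_{3-i}$ (an element of a symmetric difference cannot lie in the intersection), so $D=C_1\cup\{e_2\}=C_2\cup\{e_1\}$ and $|D|=|C_1|+1=|C_2|+1$. To prove that $D$ is a double circuit I need (i) $r(D)=|D|-2$ and (ii) $r(D-d)=r(D)$ for every $d\in D$. For (i), observe that $C_2-e_2\subseteq C_1$, so $e_2$ lies in the closure of $C_1$, giving $r(D)=r(C_1)=|C_1|-1=|D|-2$. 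For (ii), the cases $d=e_1$ and $d=e_2$ reduce to $D-d=C_2$ and $D-d=C_1$ respectively, both of rank $|D|-2$; the remaining case $d\in C_1\cap C_2$ is settled by noting that $C_1-d\subseteq D-d$ is independent of cardinality $|C_1|-1=|D|-2$, so $r(D-d)\ge |D|-2$, with equality by monotonicity.

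Once $D$ is known to be a double circuit, its circuit partition is the unique partition of $D$ whose blocks are the complements (inside $D$) of the circuits of $M[D]$. Since $D\setminus\{e_1\}=C_2$ and $D\setminus\{e_2\}=C_1$ are such circuits, both $\{e_1\}$ and $\{e_2\}$ must be blocks of the partition, yielding the two required singular classes. The main obstacle is step (ii), and specifically realising that the ``middle'' case $d\in C_1\cap C_2$ is immediately covered by the witness $C_1-d$; everything else is bookkeeping.
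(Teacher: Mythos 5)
Your proof is correct and takes essentially the same approach as the paper's: both directions rely on the Dress--Lov\'asz description of the circuit partition, and the converse sets $D=C_1\cup C_2$ and verifies the two rank conditions directly (the paper merely handles every deletion $d\in D$ uniformly via the witness $C_i-d$ for a circuit $C_i$ containing $d$, instead of your three-case split). No changes needed.
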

\begin{proof}
If $D = (D_1,\dots, D_k)$ is a double circuit such that $|D_1| = |D_2| = 1$, 
then $D-D_1$ and $D-D_2$ are a pair of circuits such that
$|(D-D_1)\triangle (D-D_2)| = |D_1\cup D_2| = 2$.  
Now suppose that $C_1$ and $C_2$ satisfy the second statement. 
Let $D = C_1\cup C_2$. It is clear that $|D| = k+2 = r(D)+2$. Moreover, 
since any $e\in D$ belongs to $C_i$ for some $i\in\{1,2\}$, then $r(D-e)\ge
r(C_i-e) = k$. So, for any $e\in D$ the rank of $D-e$ does not decrease,
which shows that $D$ is a double circuit. Finally, notice that
$D\setminus C_1$ and $D\setminus C_2$ form two singular classes
of the circuit partition of $D$. The claim follows. 
\end{proof}

\begingroup
\def\thetheorem{(Dual version of Lemma~\ref{lem:basicsuf})}
\begin{lemma}
For any matroid $M$ the following statements are equivalent:
\begin{itemize}
	\item there is a coline $L\subseteq M$ with at least two singular classes, and
	\item there is a pair $H_1$ and $H_2$ of hyperplanes of $M$
	such that 	$|H_1\triangle H_2| = 2$. 
\end{itemize}
\end{lemma}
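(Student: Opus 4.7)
The plan is to derive this statement directly from Lemma~\ref{lem:basicsuf} applied to the dual matroid $M^\ast$, exploiting the coline/double-circuit correspondence established in Observation~\ref{obs:coline-doublec}. Since both halves of the equivalence dualize cleanly, the proof should reduce to verifying two elementary translations and then quoting the primal lemma.

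First, I would translate the left-hand condition. By Observation~\ref{obs:coline-doublec}, a set $L\subseteq E(M)$ is a coline of $M$ with copoint partition $(H_1,\dots,H_k)$ if and only if $E(M)\setminus L$ is a double circuit of $M^\ast$ with circuit partition $(H_1,\dots,H_k)$. In particular, $L$ has at least two singular classes in its copoint partition precisely when the associated double circuit of $M^\ast$ has at least two singular classes in its circuit partition.

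Next, I would translate the right-hand condition. A set $H\subseteq E(M)$ is a hyperplane of $M$ if and only if $E(M)\setminus H$ is a circuit of $M^\ast$. Moreover, complementation preserves symmetric differences, since
\[
(E\setminus A)\triangle (E\setminus B)=A\triangle B
\]
for any $A,B\subseteq E$. Hence the existence of two hyperplanes $H_1,H_2$ of $M$ with $|H_1\triangle H_2|=2$ is equivalent to the existence of two circuits $C_1=E(M)\setminus H_1$ and $C_2=E(M)\setminus H_2$ of $M^\ast$ with $|C_1\triangle C_2|=2$.

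With both translations in place, applying Lemma~\ref{lem:basicsuf} to $M^\ast$ immediately yields the desired equivalence for $M$. As the argument is a pure dualization, I do not anticipate any substantive obstacle; the only point that needs care is to make sure the singular classes of the coline really correspond to the singular classes of the double circuit under the bijection from Observation~\ref{obs:coline-doublec}, which is built into the statement of that observation.
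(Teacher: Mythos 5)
Your proposal is correct and matches the paper's (implicit) argument: the paper states this lemma purely as the dual of Lemma~\ref{lem:basicsuf}, obtained exactly as you describe via Observation~\ref{obs:coline-doublec} and the fact that hyperplanes of $M$ are the complements of circuits of $M^\ast$, with complementation preserving symmetric differences. No gaps.
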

\addtocounter{theorem}{-1}
\endgroup

\subsection{Bicircular matroids}

In this subsection, we show that oriented bicircular matroids are $coGSP$.
To do so, we will show that for every bicircular matroid $B(G)$ there is a pair
of circuits $C_1$ and $C_2$ such that $|C_1\triangle C_2| = 2$.
We begin by proving the following lemma.

\begin{lemma}\label{lem:circuitsbic}
Consider a graph $G$. If $B(G)$ is a cosimple matroid, 
then $B(G)$ contains a pair of circuits $C_1$ and $C_2$
such that  $|C_1\triangle C_2| = 2$.
\end{lemma}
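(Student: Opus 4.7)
The plan, via Lemma~\ref{lem:basicsuf}, is to exhibit two circuits of $B(G)$ whose symmetric difference has size exactly two. I first argue that cosimplicity of $B(G)$ forces every non-isolated vertex of $G$ to have degree at least three: if some vertex $v$ had exactly two incident edges $e$ and $f$, then, since bicycles have no leaves, any circuit of $B(G)$ that uses $e$ must also use $f$, and symmetrically. Every circuit of $B(G)$ would then contain either both or neither of $e, f$, making $\{e, f\}$ a cocircuit of $B(G)$ and contradicting cosimplicity (since $e$ and $f$ are not coloops either).

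With the minimum-degree condition in hand, I look for a subgraph $S \subseteq G$ of the following shape: a cycle $C$ together with a path $P$ having one endpoint $u \in V(C)$ and the other endpoint $w \notin V(C)$, internally disjoint from $C$, such that $w$ has two further edges $f_1, f_2$ in $G$, each joining $w$ to a vertex of $V(C) \setminus \{u\}$. Writing $v_i$ for the endpoint of $f_i$ in $V(C)$, the subgraph $S \cup \{f_i\}$ has exactly two vertices of degree three (namely $u$ and $v_i$) and three internally disjoint paths from $u$ to $v_i$: the two arcs of $C$ between $u$ and $v_i$, together with the detour through $P$, $w$, and $f_i$. Hence $S \cup \{f_i\}$ is a theta subdivision, and therefore a circuit of $B(G)$; moreover $(S \cup \{f_1\}) \triangle (S \cup \{f_2\}) = \{f_1, f_2\}$, of size two, as required.

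The hard part will be showing that such an $S$ always exists once $B(G)$ is cosimple. The intuition is that the minimum-degree condition supplies each vertex with three incident edges, while the fact that every edge of $G$ lies in some bicycle (no coloops) makes certain cycle-plus-pendant configurations unavoidable. I expect the argument to single out a vertex $w$ of degree three whose three neighbors admit internally disjoint routings to a common cycle $C \subseteq G - w$---two through direct edges $f_1, f_2$ and the third through the path $P$---thereby producing the required $S$. Making this precise will call for a case analysis of the block-cut decomposition of $G - w$ together with Menger-type connectivity arguments exploiting the no-coparallel-pair condition inherited from cosimplicity.
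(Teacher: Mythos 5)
Your opening reduction is fine: cosimplicity of $B(G)$ does force every non-isolated vertex of $G$ to have degree at least three, and this is also the paper's first step. Your verification that, given the configuration $S$, the sets $S\cup\{f_1\}$ and $S\cup\{f_2\}$ are theta subdivisions (hence circuits of $B(G)$) with symmetric difference $\{f_1,f_2\}$ is also sound. The problem is that the proof stops exactly where the lemma's content begins: you have not shown that the configuration $S$ exists, and you say so yourself (``the hard part will be showing that such an $S$ always exists''), offering only a speculative plan involving block decompositions and Menger-type arguments. As it stands this is a reduction to an unproved claim, not a proof. Compounding this, the configuration you target is more restrictive than necessary --- you insist that both extra edges $f_1,f_2$ at $w$ land on the cycle $C$ itself --- which makes the existence question genuinely harder than it needs to be (for instance, nothing in your setup rules out that every candidate $w$ sends its spare edges back into the pendant path rather than onto a common cycle).

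The paper closes precisely this gap with a longest-path argument, which makes the existence step almost free. Take a longest path $P=v_1e_1v_2\cdots e_{k-1}v_k$. Since $d(v_k)\ge 3$ there is an edge $e_k\neq e_{k-1}$ at $v_k$, and maximality of $P$ forces its other endpoint onto $P$; thus $E'=\{e_1,\dots,e_k\}$ induces a ``lollipop'': a cycle with a (possibly trivial) pendant path ending at $v_1$. Again by maximality, every edge at $v_1$ other than $e_1$ has both endpoints on $P$; choosing two such edges $e$ and $f$, each of $E'\cup\{e\}$ and $E'\cup\{f\}$ is a connected subgraph on the $k$ vertices of $P$ with $k+1$ edges and minimum degree $2$, hence a circuit of $B(G)$, and $(E'\cup\{e\})\triangle(E'\cup\{f\})=\{e,f\}$. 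Note the key relaxation compared to your $S$: the returning edges $e$ and $f$ may land \emph{anywhere} on $P$, not necessarily on the cycle part, and it is exactly this flexibility that lets path-maximality do all the work. If you want to salvage your approach, replace ``cycle $C$ plus pendant path to $w$'' by ``lollipop with tip $w$'' and allow $f_1,f_2$ to return to any vertex of the lollipop; then a longest path starting at $w$ hands you the structure directly.
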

\begin{proof}
Since $G$ is cosimple, every vertex of $G$ is incident with at least $3$ edges. 
Consider a maximum path $P$ of $G$, where
$P = v_1 e_1 v_2\dots v_{k-1} e_{k-1}v_k$. Let $e_k\neq e_{k-1}$ be an edge
incident in $v_k$. By the choice of $P$, $e_k$ has both endpoints
in $P$.  Denote by $E'$ the set of edges $\{e_1,\dots, e_k\}$. 
Let $e$ and $f$ be a pair of edges incident in $v_1$ different from each 
other and different to $e_1$. By the choice of $P$, both endpoints of
$e$ and both endpoints of $f$ belong to $P$. It is straight forward to notice
that $E'\cup\{e\}$ and $E'\cup \{f\}$ are both circuits of $B(G)$. 
These circuits also satisfy that $|(E'\cup\{e\})\triangle (E'\cup \{f\})| = 2$.
Therefore, $B(G)$ contains a pair of circuits $C_1$ and $C_2$
such that  $|C_1\triangle C_2| = 2$.
\end{proof}

Lemmas~\ref{lem:basicsuf} and~\ref{lem:circuitsbic} imply that every cosimple
bicircular matroid has a  double circuit with at least two singular classes.
So, using Proposition~\ref{prop:sufCOGSP} we conclude that any oriented
bicircular matroid is $coGSP$.

\begin{proposition}\label{prop:bicircularcoGSP}
Every oriented bicircular matroid is $coGSP$.
\end{proposition}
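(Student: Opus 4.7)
The plan is simply to verify the hypotheses of Proposition~\ref{prop:sufCOGSP} for the class of bicircular matroids, since the essential content has already been extracted into Lemmas~\ref{lem:basicsuf} and~\ref{lem:circuitsbic}. First I would note that bicircular matroids form a minor-closed class of orientable matroids: deletion of an edge in $G$ corresponds to deletion of the same element from $B(G)$, while contraction corresponds (up to a standard construction) to a modification of $G$ that still yields a bicircular matroid. This gives the standing hypothesis of Proposition~\ref{prop:sufCOGSP}.

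Next, I would take an arbitrary cosimple bicircular matroid $B(G)$. Lemma~\ref{lem:circuitsbic} furnishes a pair of circuits $C_1, C_2$ with $|C_1 \triangle C_2| = 2$, and then Lemma~\ref{lem:basicsuf} translates this into a double circuit $D \subseteq B(G)$ with at least two singular classes in its circuit partition. Such a $D$ satisfies the dichotomy required by Proposition~\ref{prop:sufCOGSP}: in the even-degree case, having two singular classes certainly gives the required ``at least one''; in the odd-degree case, ``at least two singular classes'' is exactly what we have. Invoking Proposition~\ref{prop:sufCOGSP} then yields that every orientation of a bicircular matroid is $coGSP$.

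There is no real obstacle at this stage: all the graph-theoretic work lives in Lemma~\ref{lem:circuitsbic} (where one extracts the two near-identical circuits from a maximum path in $G$), the combinatorial bridge lives in Lemma~\ref{lem:basicsuf}, and the oriented-matroid content (flows in series extensions of uniform matroids) has been packaged in Observations~\ref{obs:even} and~\ref{obs:odd} via Lemma~\ref{lem:uniformflows}. The only point that requires a pause is confirming minor-closure of the class of bicircular matroids, and this is standard.
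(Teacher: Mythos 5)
Your argument is exactly the paper's: the proposition is derived by combining Lemma~\ref{lem:circuitsbic} with Lemma~\ref{lem:basicsuf} to obtain, in every cosimple bicircular matroid, a double circuit with at least two singular classes, and then applying Proposition~\ref{prop:sufCOGSP} (with minor-closure of bicircular matroids taken as known). Your additional remark checking both branches of the even/odd dichotomy is a correct, if routine, elaboration of the same route.
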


An equivalent reformulation of Proposition~\ref{prop:bicircularcoGSP},
states that every oriented cobicircular matroid is $GSP$.
Since every $GSP$ matroid has a NZ-$3$ coflow, we conclude the
following statement.

\begin{corollary}
Every oriented cobicircular matroid has a  NZ $3$-coflow.
\end{corollary}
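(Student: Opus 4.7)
The plan is to chain Proposition~\ref{prop:bicircularcoGSP} with the earlier proposition that every loopless $GSP$ oriented matroid admits a NZ-$3$-coflow. The link between these two results is the $GSP$/$coGSP$ duality built directly into the definitions that appear in Section~\ref{sec:prelim}.

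Concretely, I would first take an oriented cobicircular matroid $O$ and pass to its dual $O^\ast$, which by definition is an oriented bicircular matroid. Proposition~\ref{prop:bicircularcoGSP} tells us that $O^\ast$ is $coGSP$; unwinding the definition, this means that every cosimple minor of $O^\ast$ has a $\{0,1,-1\}$-flow with at most two non-zero entries. Next I would translate this via matroid duality: cosimple minors of $O^\ast$ are exactly duals of simple minors of $O$, and the flow lattice of $(O')^\ast$ is the coflow lattice of $O'$. Hence every simple minor of $O$ has a $\{0,1,-1\}$-coflow with at most two non-zero entries, which is precisely the defining property of $O$ being $GSP$.

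Once $O$ is known to be $GSP$, I would finish by applying the earlier proposition stating that a loopless $GSP$ oriented matroid has a NZ-$3$-coflow. As is standard when discussing nowhere-zero coflows, the corollary is understood to refer to loopless oriented matroids; in the cobicircular case, a loop of $O$ corresponds to a coloop of the bicircular matroid $O^\ast$, and such elements can be dropped without affecting the rest of the argument.

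There is no substantive obstacle here: essentially all of the work has already been absorbed into Proposition~\ref{prop:bicircularcoGSP}. The only point worth a line of prose is the explicit duality step, because the paper states the properties $GSP$ and $coGSP$ symmetrically but verifies $coGSP$ on the bicircular side, whereas the corollary is phrased on the cobicircular side.
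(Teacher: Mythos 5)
Your proposal is correct and follows essentially the same route as the paper: the paper likewise observes that Proposition~\ref{prop:bicircularcoGSP} ($O^\ast$ is $coGSP$) is, by the built-in duality of the definitions, equivalent to saying that every oriented cobicircular matroid is $GSP$, and then invokes the result of Goddyn, Hochst\"attler, and Neudauer that loopless $GSP$ oriented matroids have NZ $3$-coflows. Your explicit unwinding of the duality step and the remark on looplessness are fine elaborations of what the paper leaves implicit.
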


\subsection{Clone reducible matroids}

We conclude this section by defining the class of \textit{clone reducible matroids}.
In particular, the graphic matroids in this class correspond to graphic matroids
of series parallel graphs, also any orientation of a clone reducible matroid 
is a $GSP$  oriented matroid. 

Consider a matroid $M$ and a pair of elements $e$ and $f$ of $M$. 
We say that $e$ and $f$ are \textit{clones} (in $M$) if permuting $e$ and $f$ is
an automorphism of $M$. We say that a matroid $M$ is \textit{clone reducible}
if every minor of $M$ (with at least two elements) has a pair of clones.
For instance,  uniform matroids, matroids of rank at most $2$ and matroids of
corank at most $2$ are examples of clone reducible matroids.

It is not hard to notice that if $e$ and $f$ are clones in $M$, and $g\in E(M)-\{e,f\}$,
then $e$ and $f$ are clones in $M-g$. Also notice that if $e$ and $f$ are clones in 
$M$, then $e$ and $f$ are clones in $M^\ast$. Putting these two observations
together, we conclude that if $e$ and $f$ are clones of $M$, and $N$ is
a  minor of $M$ such that $e,f\in E(N)$, then $e$ and $f$ are clones
in $N$. This implies that $M$ is a clone reducible matroid, if and only if
there is a linear ordering $e_1 \le e_2 \le \cdots \le e_n$ of $E(M)$,
such that $e_i$ has a clone in the restriction $M[e_1,\dots, e_i]$, for
all $i\in\{2,\dots, n\}$.

Notice that if $e$ and $f$ are a pair of parallel edges in a graph $G$,
then $e$ and $f$ are clones in $M(G)$. Similarly, if $G$ has a vertex $v$
of degree two, and $e$ and $f$ are incident with $v$, then
$e$ and $f$ are also clones in $M(G)$. So, graphic matroids
of series parallel graphs ($M(K_4)$-free graphic matroids)
are clone reducible matroids.

\begin{proposition}\label{prop:Tbinary}
For a binary matroid $M$ the following statements are equivalent:
\begin{enumerate}
	\item $M$ does not contain an $M(K_4)$-minor, and
	\item $M$ is a clone reducible matroid.
\end{enumerate}
\end{proposition}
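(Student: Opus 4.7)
The two implications call for different techniques: $(2) \Rightarrow (1)$ is a direct computation in $M(K_4)$, while $(1) \Rightarrow (2)$ uses Tutte's excluded-minor theorem for graphic matroids to reduce to the series-parallel case.

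For $(2) \Rightarrow (1)$, the key fact to verify is that $M(K_4)$ itself has no pair of clones. I would enumerate its circuits---four triangles and three $4$-cycles---and check that any transposition of two edges of $K_4$ sends some triangle to a non-circuit. (For adjacent edges $e,f$ meeting at a vertex $v$, the triangle through $e$ and the edge at $v$ opposite $f$ is destroyed; for disjoint $e,f$, a triangle at either endpoint is destroyed.) Since clone reducibility requires every minor with at least two elements to contain a pair of clones, no clone reducible matroid can have an $M(K_4)$-minor.

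For $(1) \Rightarrow (2)$, I would first pass to the graphic case. By Tutte's excluded-minor theorem, a binary matroid is graphic iff it contains none of $F_7$, $F_7^\ast$, $M^\ast(K_5)$, and $M^\ast(K_{3,3})$ as a minor. Each of these four matroids contains $M(K_4)$ as a minor: $F_7 \setminus e \cong M(K_4)$ for every element $e$ of $F_7$, and dually $F_7^\ast / e \cong M(K_4)$; while $K_4$ is a minor of both $K_5$ and $K_{3,3}$, so by the self-duality $M(K_4) = M^\ast(K_4)$ the matroid $M(K_4)$ is a minor of both $M^\ast(K_5)$ and $M^\ast(K_{3,3})$. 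Hence a binary matroid $M$ with no $M(K_4)$-minor must be $M(G)$ for some graph $G$ that is itself $K_4$-minor-free. Since the class of $K_4$-minor-free graphs is minor closed and minors of $M(G)$ correspond to graphic matroids of minors of $G$, it is enough---via the inductive reformulation of clone reducibility recorded in the paper---to exhibit a single pair of clones in $M(G)$ whenever $|E(G)| \ge 2$.

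The main obstacle is this last step, which I would tackle using the structural theory of $K_4$-minor-free graphs (series-parallel networks). Every such graph with at least two edges contains one of the following reducing configurations: two parallel edges, two loops, two bridges, a degree-$2$ vertex whose incident edges form a series pair, or the graph is a single cycle. In each of the first four configurations the marked pair of edges is a clone pair in $M(G)$ (parallel pairs, series pairs, loops, and bridges are all preserved by their own transposition); in the cycle case $M(C_n) = U_{n-1,n}$ is uniform and every pair of elements is a clone pair. Care is needed to exhaust the small and disconnected base cases, but the underlying reduction is classical.
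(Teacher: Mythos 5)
Your proof is correct and follows essentially the same route as the paper: $(2)\Rightarrow(1)$ by checking that $M(K_4)$ admits no clone pair, and $(1)\Rightarrow(2)$ by reducing to cycle matroids of $K_4$-minor-free (series-parallel) graphs and exhibiting clones via parallel pairs, series pairs at degree-two vertices, and the degenerate loop/coloop/cycle cases. The only real difference is that you derive the identification of $M(K_4)$-minor-free binary matroids with series-parallel graphic matroids from Tutte's excluded-minor theorem (correctly verifying that $F_7$, $F_7^\ast$, $M^\ast(K_5)$, $M^\ast(K_{3,3})$ all have $M(K_4)$-minors), whereas the paper simply cites this correspondence as known.
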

\begin{proof}
On the one hand, $M(K_4)$-free binary matroids correspond
to graphic matroids of series parallel graphs. Thus, by the arguments
preceding this statement, the first item implies the second one. 
On the other hand, $M(K_4)$ does not contain a pair of clones, thus
the second statement implies the first one.
\end{proof}

We have previously observed that the class of clone reducible
matroids is closed under duality. Now we show that every 
orientation of a clone reducible matroid is $GSP$, and thus
$coGSP$.

\begin{observation}\label{obs:T}
Every orientation of a clone reducible matroid is a $GSP$ and 
$coGSP$ oriented matroid.
\end{observation}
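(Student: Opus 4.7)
The plan is to reduce the full statement to its $coGSP$ half via duality, and then to verify $coGSP$ using Lemmas~\ref{lem:basicsuf} and~\ref{lem:doublecircuitS}. Since the class of clone reducible matroids is closed under duality (as observed just before the statement) and since an orientation $O$ is $GSP$ precisely when $O^\ast$ is $coGSP$, it suffices to prove that every orientation of a clone reducible matroid is $coGSP$; the $GSP$ half then follows by applying the same argument to the dual.

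Let $O$ be an orientation of a clone reducible matroid $M$ and let $N$ be a cosimple minor of $O$. When $|E(N)|\le 1$ the zero vector, or the signed vector of a size-one circuit, already serves as a flow with at most two non-zero entries, so assume $|E(N)|\ge 2$. The underlying matroid of $N$ is a minor of $M$, hence clone reducible, and therefore possesses a pair of clones $e,f$. If $\{e,f\}$ is itself a circuit, the signed vector of this circuit is already the desired flow. Otherwise I intend to exhibit a circuit $C$ of $N$ containing $e$ but not $f$; then the clone automorphism exchanging $e$ and $f$ sends $C$ to a circuit $C'=(C\setminus\{e\})\cup\{f\}$, so $|C\triangle C'|=2$. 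Lemma~\ref{lem:basicsuf} then produces a double circuit of $N$ with at least two singular classes, and Lemma~\ref{lem:doublecircuitS} (whose hypothesis is satisfied in both parity cases as soon as two singular classes are present) yields a $\{0,1,-1\}$-flow with at most two non-zero entries.

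The main obstacle is to verify the existence of a separating circuit $C$ in the case when $\{e,f\}$ is not a circuit. I plan to argue by contradiction: assume that every circuit of $N$ contains both of $e,f$ or neither. If $B$ were a basis of $N$ avoiding both $e$ and $f$, the fundamental circuit $C(e,B)\subseteq B\cup\{e\}$ would contain $e$ and therefore, by the assumption, also contain $f$, contradicting $f\notin B\cup\{e\}$. Hence every basis of $N$ meets $\{e,f\}$, so $\{e,f\}$ contains a cocircuit of $N$. That cocircuit is either $\{e\}$, $\{f\}$, or $\{e,f\}$: the first two would make $e$ or $f$ a coloop, and the third would make $\{e,f\}$ a series pair. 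Both possibilities are ruled out by the cosimplicity of $N$, completing the contradiction and the plan.
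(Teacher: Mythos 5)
Your proposal is correct and follows essentially the same route as the paper: reduce to the $coGSP$ statement, use a pair of clones $e,f$ in a cosimple minor to produce two circuits $C$ and $(C\setminus\{e\})\cup\{f\}$ with symmetric difference $2$, and then invoke Lemma~\ref{lem:basicsuf} together with Lemma~\ref{lem:doublecircuitS} (equivalently, Proposition~\ref{prop:sufCOGSP}). The only difference is that you spell out, via the basis/cocircuit contradiction, why cosimplicity guarantees a circuit containing $e$ but not $f$ --- a step the paper asserts directly from $e$ and $f$ not being coparallel.
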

\begin{proof}
It suffices to show that any orientation of a clone reducible matroid
is $coGSP$. Suppose that $e$ and $f$ are 
pair of clones in a cosimple clone reducible matroid $M$.
In particular, 
$e$ and $f$ are not coparallel elements, so there is a circuit
$C_1$ such that $e\in C_1$ but $f\not\in C_1$. Since $e$ and $f$
are clones, then the set $C_2$ defined by $(C_1-e)\cup\{f\}$ is a circuit
of $M$. Thus, $C_1$ and $C_2$ are a pair of circuits of $M$ such that
$|C_1\triangle C_2| = 2$. Thus, by Lemma~\ref{lem:basicsuf}, 
$M$ has a double circuit with two singular classes. So, by
Proposition~\ref{prop:sufCOGSP} we conclude that every orientation
of a clone reducible matroid is a $coGSP$ oriented matroid. 
\end{proof}

In \cite{albrechtPhd} the author shows that orientations of lattice path
matroids  are $GSP$. We propose a simple proof of this fact by
showing that every lattice path matroid is a clone reducible matroid.

\begin{lemma}\label{lem:clones}
Every lattice path matroid on at least two elements has a pair of clones. 
\end{lemma}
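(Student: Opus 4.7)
The plan is to exploit the interval presentation of $M[P,Q]$ as a transversal matroid with intervals $I_j = [p_j, q_j]$ for $j \in \{1,\dots,r\}$, where $p_j$ (respectively $q_j$) is the position of the $j$-th north step in the upper path $P$ (respectively the lower path $Q$), so that $p_1 < \cdots < p_r$, $q_1 < \cdots < q_r$, and $p_j \le q_j$ for every $j$. The elementary observation driving the argument is that any two ground-set elements $i,i' \in \{1,\dots,n\}$ lying in exactly the same $I_j$'s are clones, because the transposition $(i\;i')$ is then a trivial automorphism of this transversal presentation and therefore of the matroid.

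The main step will be a pigeonhole argument on the function $S(i) := \{j : i \in I_j\}$. Moving from $i$ to $i+1$ can change $S$ only when $i+1$ equals some $p_j$ (an interval starts) or $i$ equals some $q_j$ (an interval ends), giving at most $2r$ change positions and hence at most $2r+1$ distinct values of $S$ on $\{1,\dots,n\}$. Thus if $n \ge 2r+2$, two elements must share the same $S$-value and produce a pair of clones. Since $M^{\ast}$ is again a lattice path matroid but of rank $m = n-r$, the same reasoning applied to $M^{\ast}$ yields clones whenever $n \ge 2m+2$.

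The main obstacle will be the residual regime $|m-r| \le 1$, where the above count is too coarse; I would refine it using loops and coloops. Any two loops (or any two coloops) of $M$ are automatically clones, so we may assume $M$ has at most one loop and at most one coloop. Under the stronger assumption that $M$ has no loops and no coloops, every element lies in some interval and no interval is a singleton, which forces $p_1 = 1$, $q_r = n$, $p_j < q_j$ for every $j$, and $p_{j+1} \le q_j + 1$ (no gap). This eliminates two change positions from the count, sharpening the bound to $2r - 1$ distinct $S$-values. Then $n \ge 2r$, i.e.\ $m \ge r$, already suffices to produce clones, and the dual refinement handles $m \le r$; since one of these inequalities always holds, the no-loop/no-coloop case is settled.

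The remaining cases, where $M$ has exactly one loop or one coloop, reduce to the previous situation: $M \setminus e$ for a loop $e$ (respectively $M/e$ for a coloop $e$) is a lattice path matroid on fewer elements with one fewer loop or coloop, and any pair of clones in the reduced matroid lifts to a pair of clones in $M$ because extending the transposition by the identity on $e$ still preserves the circuit structure. Iterating this reduction brings us into one of the cases already settled and finishes the proof.
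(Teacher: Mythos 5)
Your proof is correct (modulo one degenerate case that trips up the paper's own argument as well) and takes a genuinely different route. The paper's proof is a short, direct verification that an \emph{explicit} pair, namely the elements $1$ and $2$, are clones: after assuming the matroid has no coloops, it checks from the lattice-path description of independent sets that exchanging $1$ and $2$ preserves independence. No presentation, no duality, and no counting are needed. You instead work with the interval presentation $I_j=[p_j,q_j]$, observe that two elements with the same signature $S(i)=\{j: i\in I_j\}$ are clones, and pigeonhole on signatures, sharpening the count in the loop-free, coloop-free case to at most $2r-1$ runs (since $p_1=1$ and $q_r=n$ remove two of the $2r$ possible change positions), which together with self-duality of the class covers everything because $n=m+r\ge 2\min(m,r)$. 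This is heavier machinery --- you need duality and minor-closure of lattice path matroids and the dictionary between loops/coloops and the presentation --- but it buys more: clone pairs are located throughout the ground set rather than only at positions $1,2$, their number is bounded below, and the signature argument applies verbatim to any transversal matroid presented by intervals. The individual steps check out: same-signature elements are clones, the transition counts are right, a singleton interval forces a coloop and an uncovered element is a loop, and clones in $M\setminus e$ ($e$ a loop) or $M/e$ ($e$ a coloop) do lift to $M$.

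The one place your argument does not close is the terminal case of the final reduction, and it is a blind spot shared with the paper: the two-element lattice path matroid with $P=Q=NE$, isomorphic to $U_{1,1}\oplus U_{0,1}$ (one coloop and one loop), has no pair of clones at all, so your iteration bottoms out there without producing anything, while the paper's proof simply opens with ``suppose that $L$ has no coloops'' and never revisits the excluded case. The lemma as literally stated needs this degenerate configuration excluded. Since such a minor is neither simple nor cosimple, the exclusion is harmless for the intended application to $GSP$ and $coGSP$ matroids, but you should say explicitly where your iteration can terminate and why that case does not arise in the application.
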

\begin{proof}
Consider a lattice path matroid $L$ on $[n]$, and suppose that $L$
has no coloops. It is not hard to notice that if $I$ is an independent
set such that $1\in I$, then $(I-1)\cup\{2\}$ is also an independent set. 
With out loss of generality suppose that $I$ is a base. If $2\in I$, then
$(I-1)\cup\{2\} = I-1$ and the claim is trivial.  On the other hand,
if $2\not\in I$, by the maximality of $I$,  there is a lattice path $P$ whose
north steps include $1$ and its east steps include $2$, i.e., 
$P = n_1e_2Q$, for some lattice path $Q$ starting in $(1,1)$.
Thus, the lattice path $e_1n_2Q$ is a lattice path which certifies
that $(I-1)\cup\{2\}$ is an independent set in $I$. With similar arguments
we observe that if $I$ is an independent set such that $2\in I$, then
$(I-2)\cup\{1\}$ is also an independent set. Putting these observations
together, we conclude that if $C$ is a circuit such that $1\in C$
and $2\not\in C$, then $C-1\cup\{2\}$ is a circuit (and vicecersa). 
Therefore, the function $f\colon [n]\to [n]$ that fixes $[n]\setminus\{1,2\}$
and permutes $1$ with $2$, maps circuits $M$ to circuits of $M$,
i.e., $f$ is an automorphism of $M$. 
Thus, $1$ and $2$ are clones in $M$. 
\end{proof}

\begin{proposition}
Every orientation of a lattice path matroid is a $GSP$ oriented matroid.
\end{proposition}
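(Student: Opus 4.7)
The plan is to combine Lemma~\ref{lem:clones} with the fact that the class of lattice path matroids is minor-closed, in order to show that every lattice path matroid is clone reducible. Once this is established, the conclusion follows immediately from Observation~\ref{obs:T}.

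First I would invoke (or briefly justify) the known fact that lattice path matroids are closed under taking minors: both deletions and contractions of a lattice path matroid $L$ on $[n]$ are again lattice path matroids, since the diagram describing $L$ restricts in a natural way. With this in hand, let $L$ be a lattice path matroid on at least two elements and let $N$ be any minor of $L$ on at least two elements. Then $N$ is itself a lattice path matroid, so by Lemma~\ref{lem:clones} either $N$ is already simplified by collapsing a coloop, or $N$ contains a pair of clones. A small amount of care is needed here because Lemma~\ref{lem:clones} is stated under the hypothesis that $L$ has no coloops; I would handle coloops separately by noting that a coloop is trivially a clone with any other coloop, or by reducing to the coloop-free case via the standard convention.

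Having verified that every minor of $L$ with at least two elements has a pair of clones, $L$ is clone reducible by definition. Then Observation~\ref{obs:T} applies: every orientation of a clone reducible matroid is a $GSP$ (and $coGSP$) oriented matroid, and in particular every orientation of $L$ is $GSP$.

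The main obstacle, such as it is, is making sure the minor-closure statement for lattice path matroids is clean. If one prefers not to quote that fact, an alternative route is to iterate Lemma~\ref{lem:clones} directly: pick a pair of clones $e,f$ in $L$, note that deleting or contracting $f$ yields a lattice path matroid (the combinatorics of the path diagram makes this explicit), and so produce the linear ordering $e_1\le\cdots\le e_n$ of $E(L)$ witnessing clone reducibility. Either way, the proof is essentially a one-line reduction to Observation~\ref{obs:T} via Lemma~\ref{lem:clones}.
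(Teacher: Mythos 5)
Your argument is exactly the paper's intended proof: the proposition is stated there without a separate proof because it is meant to follow from Lemma~\ref{lem:clones}, the (unstated but standard) minor-closure of lattice path matroids, and Observation~\ref{obs:T} --- precisely the chain you describe. Your extra care in justifying minor-closure and in handling coloops only makes explicit what the paper leaves implicit, so the proposal is correct and takes essentially the same route.
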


\begin{corollary}
Every orientation of a lattice path matroid has a $NZ$ $3$-coflow.
\end{corollary}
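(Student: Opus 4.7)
The plan is essentially to combine two results already established in the paper. The preceding proposition (proved via Lemma~\ref{lem:clones} and Observation~\ref{obs:T}) states that every orientation of a lattice path matroid is $GSP$. Earlier in the paper, the result of Goddyn, Hochst\"attler, and Neudauer is recorded: every loopless $GSP$ oriented matroid admits a nowhere-zero $3$-coflow. Chaining these two facts is the entire argument.

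More concretely, I would start from an arbitrary orientation $\mathcal{O}$ of a lattice path matroid $L$. Apply the preceding proposition to conclude that $\mathcal{O}$ is a $GSP$ oriented matroid. Then invoke the earlier $GSP$ proposition to obtain a nowhere-zero $3$-coflow on $\mathcal{O}$.

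The only bookkeeping concern is the loopless hypothesis required by the $GSP$ proposition. This is a genuinely minor point: if $L$ has loops, they are irrelevant for the coflow lattice (loops belong to no cocircuit, hence any coflow must vanish on them and they cannot be assigned a nonzero value in any coflow anyway). So the statement should be read as applying to the loopless restriction, or equivalently one passes to $\mathcal{O}\setminus L_0$, where $L_0$ is the set of loops of $L$; this is still an oriented lattice path matroid (loop deletion preserves the class) and still $GSP$ by the previous proposition, so the nowhere-zero $3$-coflow exists on it and is then extended by zero on $L_0$.

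There is no real obstacle here; the corollary is a one-line consequence of the preceding proposition and the $GSP$-implies-NZ-$3$-coflow result, so I do not anticipate any difficult step beyond the loop caveat noted above.
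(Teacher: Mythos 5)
Your argument is exactly the paper's: the corollary is an immediate consequence of the preceding proposition (oriented lattice path matroids are $GSP$) together with the Goddyn--Hochst\"attler--Neudauer result that loopless $GSP$ oriented matroids have nowhere-zero $3$-coflows. Your remark about the implicit loopless hypothesis is a fair reading of the statement, so nothing further is needed.
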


\subsection{Rank $3$ matroids}

Consider a rank $3$ simple and cosimple non-uniform oriented matroid $O$ on the
set $E$.  Hochst\"attler and Nickel~\cite{hochstattlerCDM2}, show that
if $O$ is not an orientation of $M(K_4)$,  then the flow lattice
$\mathcal{F}_O$ is $\mathbb{Z}^E$.
In particular, this implies that in this case, $O$ has a $\{0,1,-1\}$-flow
with at most one non-zero entry. 

\begin{proposition}
Every $M(K_4)$-free oriented matroid of rank at most $3$ is $coGSP$.
Dually, every $M(K_4)$-free oriented matroid of corank at most $3$ is $GSP$
\end{proposition}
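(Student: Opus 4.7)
The plan is to verify the $coGSP$ condition directly: for each cosimple minor $N$ of $O$, I would exhibit a $\{0,1,-1\}$-flow with at most two non-zero entries. Since $M(K_4)$ is self-dual and rank does not increase under minors, every such $N$ is itself an $M(K_4)$-free oriented matroid of rank at most $3$. The dual half of the proposition then follows at once from the self-duality of $M(K_4)$ together with the fact that $coGSP$ is by definition the dual class of $GSP$, so all of the work lies on the $coGSP$ side.

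First I would dispose of the cases in which $N$ fails to be simple. If $N$ contains a loop $e$, then the signed circuit on $\{e\}$ is already a flow with a single non-zero entry. If $N$ is loopless but has a parallel pair $\{e,f\}$, then the signed $2$-circuit on $\{e,f\}$ is a flow with two non-zero entries. So I may assume henceforth that $N$ is simple as well as cosimple.

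Under simplicity and cosimplicity I would case-split on the rank. Ranks $0$ and $1$ are vacuous, since any element of a simple rank-$1$ matroid would be a coloop. In rank $2$, simplicity forces $N\cong U_{2,n}$, and cosimplicity forces $n\ge 4$, so Lemma~\ref{lem:uniformflows}(1) (even rank) yields a flow with a single non-zero entry. In rank $3$ with $N$ uniform, cosimplicity gives $N\cong U_{3,n}$ with $n\ge 5$, and Lemma~\ref{lem:uniformflows}(2) (odd rank) yields a flow with exactly two non-zero entries. The remaining and central case is that $N$ is a simple, cosimple, non-uniform, rank-$3$ oriented matroid that is $M(K_4)$-free by hypothesis; but this is precisely the setting of the Hochst\"attler--Nickel theorem recalled immediately before the proposition, and that theorem gives $\mathcal{F}_N=\mathbb{Z}^{E(N)}$, so any standard unit vector serves as a flow with a single non-zero entry. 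I do not foresee a serious obstacle: the argument is a clean case analysis, and the only point requiring a moment of care is checking that the numerical hypothesis $n\ge r+2$ needed by Lemma~\ref{lem:uniformflows} is automatically forced by cosimplicity in each of the uniform subcases.
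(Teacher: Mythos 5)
Your proof is correct and follows essentially the same route as the paper: small circuits dispose of non-simple cosimple minors, and the Hochst\"attler--Nickel theorem handles the simple, cosimple, non-uniform rank-$3$ case. The only (cosmetic) difference is that you treat the uniform and low-rank cases directly via Lemma~\ref{lem:uniformflows} with explicit cosimplicity counts, whereas the paper routes them through clone reducibility and Observation~\ref{obs:T}; both reduce to the same lattice computation.
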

\begin{proof}
Recall that we want to show that every such cosimple oriented matroid $O$
has a $\{0,1,-1\}$-flow with at most two non-zero entries. 
The arguments above this proposition takes care of the case when $O$ is a simple
rank $3$ non-uniform oriented matroid. Now, if $O$ has a circuit
$C$ of size at most two, then $C$ is a $\{0,1,-1\}$-flow with at most two non-zero
entries. Finally, if $O$ is uniform or has rank
at most $2$, then every minor of the underlying matroid of $O$ has a pair
of clones.  So, the claim follows by Observation~\ref{obs:T}.
\end{proof}


%
%
%
%
%
%
%
%
%
%
%
%
%


\section{Conclusions}
\label{sec:conclusions}

A simple observation used to prove Theorem~\ref{thm:main} shows that the degree
of double circuits in bicircular matroids is bounded above by $6$. This raises
the natural problem of describing the classes of matroids obtained by considering
bicircular matroids whose double circuits have degree at most $k$ where
$k\in\{3,4,5\}$. This question has been answered from
another perspective for $k = 3$: the positive double circuits in a bicircular matroid
$B$ have degree at most $3$ if and only if $B$ is a binary bicircular matroid
\cite{matthewsQJMOS28}.
If we also restrict the degree of colines, for the case $k = 4$ we recover
ternary bicircular matroids \cite{sivaramanDM328}. For the case $k = 5$, considering
bicircular matroids with double circuits and colines  of degree at most $k$, we do
not recover representation over $GF(4)$ since $P_6$ has no positive double
circuits nor colines of degree greater that or equal to $5$, but is a bicircular matroid
not representable over $GF(4)$ \cite{chunDM339}. 
We are interested in knowing if there is a meaningful description of 
bicircular matroids whose double circuits have bounded degree.


\begin{problem}
Provide a meaningful description of bicircular matroids where every double
circuit has degree at most $k$ for  $k\in\{4,5\}$.
\end{problem}

The motivation of this work was to settle Conjecture~\ref{conj:coline};
we showed that it does not hold even for cobicircular matroids. 
Nonetheless, we prove that all oriented cobicircular matroids
are $GSP$. We did so by using Proposition~\ref{prop:sufCOGSP},
which yields a weaker condition (to that in Conjecture~\ref{conj:coline})
for proving that certain classes oriented matroids are $GSP$. We propose
the following question.

\begin{question}
Is it true that every cosimple gammoid $M$ contains a pair of circuits $C_1$ and
$C_2$ such that $|C_1\triangle C_2| = 2$?
\end{question}

At the end of Section~\ref{sec:GSP}, we introduce the class of 
clone reducible matroids.
In particular, we showed that lattice path matroids and graphic matroids of
series parallel are examples of clone reducible matroids.
Since lattice path matroid and graphic matroids
form a pair of incomparable matroid classes, then the class of
clone reducible matroids properly contains both classes. We also noticed that
$M(K_4)$ does not have a pair of clones, and it is straightforward to observe
that the three whirl does not have a pair of clones either.
What are the excluded minors to the class of clone reducible matroids?

\begin{problem}
Characterize the class of clone reducible matroids.
\end{problem}

\section*{Acknowledgements}
The authors gratefully acknowledge discussions with Luis Goddyn. In particular, 
he proposed Lemma~\ref{lem:circuitsbic} with a different proof method.
This work was carried out during a visit of the first author
at FernUniversit\"at in Hagen, supported by DAAD grant 57552339.

\end{document}